\newtheoremstyle{plainNoItalics}{}{}{\normalfont}{}{\bfseries}{.}{ }{}
\theoremstyle{plain}
\theoremstyle{plainNoItalics}
\newtheorem{example}{Example}[section]
\newtheorem{criterion}{Criterion}
\theoremstyle{definition}\newtheorem{proposition}{Proposition}
\theoremstyle{definition}
\theoremstyle{definition}
\theoremstyle{definition}\newtheorem{theorem}{Theorem}
\theoremstyle{definition}\newtheorem{lemma}{Lemma}
\theoremstyle{definition}\newtheorem{rem}{Remark}
\numberwithin{equation}{section}
\numberwithin{theorem}{section}
\numberwithin{lemma}{section}
\numberwithin{figure}{section}
\numberwithin{example}{section}
\numberwithin{rem}{section}
\numberwithin{assumption}{section}
\newcommand{\rd}{{\mathrm{d}}}
\newcommand{\bv}{{\bm{v}}}
\newcommand{\bx}{{\bm{x}}}
\newcommand{\bs}{{\bm{s}}}
\newcommand{\bn}{{\bm{n}}}
\newcommand{\paran}[1]{\left( #1 \right)}
\newcommand{\SqBrack}[1]{\left[ #1 \right]}
\newcommand{\real}{\mathbb{R}}
\definecolor{ChhaiA}{rgb}{0.9, 0.9, 0.9}
\definecolor{ChhaiB}{rgb}{0.8, 0.8, 0.8}
\definecolor{ChhaiC}{rgb}{0.7, 0.7, 0.7}
\definecolor{ChhaiD}{rgb}{0.6, 0.6, 0.6}
\definecolor{ChhaiE}{rgb}{0.5, 0.5, 0.5}
\definecolor{ChhaiF}{rgb}{0.4, 0.4, 0.4}
\definecolor{ChhaiG}{rgb}{0.3, 0.3, 0.3}
\definecolor{ChhaiH}{rgb}{0.2, 0.2, 0.2}
\title{Physics-informed machine learning for reconstruction of dynamical systems with invariant measure score matching}
\author{
Yongsheng Chen
\thanks{School of Mathematical Sciences, Zhejiang University, Hangzhou, 310027, China. {\tt 22035024@zju.edu.cn}}
\and
Suddhasattwa Das
\thanks{Department of Mathematics and Statistics, Texas Tech University, Lubbock, TX, 70409, USA.  {\tt suddas@ttu.edu}.}
\and
Wei Guo
\thanks{Department of Mathematics and Statistics, Texas Tech University, Lubbock, TX, 70409, USA. 
{\tt weimath.guo@ttu.edu}. }
\and
Xinghui Zhong
\thanks{Corresponding author. School of Mathematical Sciences, Zhejiang University, Hangzhou, 310027, China. {\tt zhongxh@zju.edu.cn}}
}
\begin{document}

\maketitle
\begin{abstract}
In this paper, we develop a novel mesh-free framework, termed physics‑informed neural networks with invariant measure score matching (PINN‑IMSM), for reconstructing dynamical systems from unlabeled point‑cloud data that captures the system's invariant measure. The invariant density  satisfies the steady-state Fokker-Planck (FP) equation. We reformulate this equation in terms of its score function (the gradient of the log-density), which is estimated directly from data via denoising score matching, thereby bypassing explicit density estimation. This learned score is then embedded into a physics‑informed neural network (PINN) to reconstruct the drift velocity field under the resulting score-based FP equation. The mesh‑free nature of PINN allows the framework to scale efficiently to higher dimensions, avoiding the curse of dimensionality inherent in mesh‑based methods. To address the ill-posed nature of high-dimensional inverse problems, we further recast the problem as a PDE‑constrained optimization that seeks the minimal‑energy velocity field. Under suitable conditions, we prove that this problem admits a unique solution that depends continuously on the score function. The constrained formulation is solved using a stochastic augmented Lagrangian method.  Numerical experiments on representative dynamical systems, including the Van der Pol oscillator, an active swimmer in an anharmonic trap, chaotic Lorenz-63 and Lorenz-96 systems, demonstrate that PINN-IMSM accurately recovers invariant measures and reconstructs faithful dynamical behavior, successfully handling problems in up to five dimensions. 

\end{abstract}
{\bf Keywords:} 
{Dynamical system learning, invariant measure, score matching, physics-informed neural networks (PINNs).}
\section{Introduction}

Reconstructing dynamical systems from observational data is a fundamental task across science and engineering, central to understanding and predicting complex phenomena from chaotic attractors and turbulent flows to pattern formation and neural activity.  The problem involves finding a mathematical model, often parameterized in a trainable manner, such that its simulated behavior aligns with the observed data.
Recent data-driven machine learning approaches have significantly advanced this field, providing flexible frameworks for systems with incomplete or noisy observations. Neural ordinary differential equations (ODEs) \cite{chen2018neural,portwood2019turbulence,linot2022data,linot2023stabilized} and sparse identification of nonlinear dynamics (SINDy) \cite{brunton2016discovering,messenger2021weak,fasel2022ensemble} have proven effective for dynamical system identification. However, these methods adopt a Lagrangian perspective and rely heavily on accurate trajectory data to estimate derivatives, limiting their applicability when temporal information is sparse or unavailable. A scenario for recovering unknown dynamical systems from data with missing time labels was considered in \cite{zeng2023reconstruction}, where the data are assumed to lie on a one-dimensional manifold. Their method minimizes the sliced-Wasserstein distance between samples generated by neural ODEs and training data, employing a trajectory segmentation technique to facilitate training. Nevertheless, the approach remains Lagrangian and is limited to lower-dimensional manifolds.

In this paper, we focus on the scenario where observational data are provided as a point cloud consisting of samples from an underlying dynamical system,  but lack reliable temporal labels due to slow or unrecorded sampling. Such data frequently appear in fields including molecular dynamics, biology, healthcare, medical research, and numerical weather prediction.  For dynamical systems, the distribution of states converges over time to a time‑independent  limit known as \emph{invariant measure}. 
The observational data, though devoid of explicit timing, provides an empirical approximation of this invariant measure, which in turn encodes key information about the system’s dynamic law.  Classical Lagrangian methods, which rely on time‑labeled trajectories to estimate derivatives, are ineffective in this label‑deficient setting.


To address the challenge, an Eulerian approach has been developed in \cite{yang2023optimal,botvinick2023learning} to learn the dynamics from such data.  This approach formulates the reconstruction as a partial differential equation (PDE) inverse problem, which infers the unknown velocity field directly from  data by imposing the steady‑state Fokker–Planck (FP) equation as a physical constraint, thereby circumventing the need for time‑labeled trajectories.
The method proceeds in three steps. First, the invariant density is approximated empirically by binning the data into a histogram $\rho^\star$. This invariant density is known to satisfy the steady-state FP equation. Second, a finite volume scheme is employed to compute the  solution $\rho(v(\theta))$ of the FP equation, for a  velocity field $v(\theta)$, which  is parameterized by a neural network with trainable parameters $\theta$. Third, $v(\theta)$ is optimized using a gradient-based method to minimize the loss function $\mathcal{J}(\theta):=\mathcal{D}(\rho(v(\theta)), \rho^\star)$, where $\mathcal{D}$ denotes a metric or divergence between probability measures.
This framework  has been validated on several benchmark examples. However, because it relies on a mesh-based PDE solver,  it suffers from  the curse of the dimensionality and is practically limited to problems of low dimensions $(d\le3)$. 



The practical scalability of the aforementioned mesh‑based Eulerian method is limited by their reliance on mesh-based discretization and the need to repeatedly solve the governing PDE, which becomes computationally prohibitive in higher dimensions.
This limitation has motivated the development of mesh-free learning paradigms that encode  the governing PDE structure directly directly within neural network parameterizations while maintaining differentiability for gradient-based optimization. Physics-informed neural networks (PINNs) \cite{raissi2019physics,karniadakis2021physics,wang2022and,jagtap2020conservative,stock2023bayesian} provide one such 
framework by embedding the equations as soft or hard constraints in the training loss, enabling parameter/field identification without an explicit mesh-based PDE solver. Another direction is to use deep operator networks, such as DeepONet \cite{lu2021learning}, Fourier neural operators \cite{li2021fourier}, and graph neural operators \cite{anandkumar2020neural}, which learn mappings between function spaces to recover unknown parameters from observational data. While powerful, these operator methods typically demand large volumes of high‑quality training data \cite{de2022cost,tanyu2023deep}. Recent works that incorporate PDE constraints into such operator‑learning frameworks, aim to reduce data requirements and enhance generalization capabilities \cite{li2024physics,rosofsky2023applications}. 



Motivated by the limitations of existing methods, we propose a novel mesh-free framework termed {\em{PINN with invariant measure score matching}} (PINN-IMSM), designed to reconstruct dynamical systems from data without explicit temporal labels. A key step is to reformulate the stationary FP equation in terms of the score function, i.e., the gradient of the log-density of the invariant measure, yielding a score‑based stationary FP equation. PINN‑IMSM first uses  denoising score matching \cite{hyvarinen2005estimation,song2020sliced} to learn this score function directly from unlabeled observational data, avoiding explicit density estimation. The learned score is subsequently embedded into a PINN to reconstruct the underlying velocity field by enforcing the physics constraint derived from the score-based FP equation.

To overcome the ill-posedness challenge in the standard PINN approach,  we further formulate the reconstruction as a PDE‑constrained optimization problem, seeking the minimal‑energy velocity field that satisfies the FP equation. We prove that, under suitable conditions, this PDE-constrained optimization problem admits a unique solution, and that the solution is continuous with respect to the score function.  This constrained formulation is solved efficiently via the stochastic augmented Lagrangian method \cite{dener2020training}. Numerical experiments demonstrate that the proposed PINN-IMSM method achieves accurate and stable reconstruction of dynamical systems from observational data, successfully handling problems in dimensions up to five, thereby significantly surpassing the scalability  of existing mesh-based and trajectory-based methods.

The remainder of this paper is organized as follows. In Section \ref{sec:back}, we introduce background on stochastic differential equations,  invariant measures, the FP equations,  and the score estimation and PINNs.  Section \ref{sec:method} is devoted to the details of the proposed PINN-IMSM framework. Section \ref{sec:num} presents numerical results validating the method's performance and robustness. Conclusions and directions of future work are discussed in Section \ref{sec:con}.

\section{Background}\label{sec:back}
\subsection{Stochastic differential equations, invariant measures, and Fokker-Planck equations}
Consider the stochastic differential equation (SDE) of the form
\begin{equation} \label{eq:sde}
\mathrm{d}  X_t=\bv\left(X_t\right) \mathrm{d} t+\Gamma\left(X_t\right) \mathrm{d}  W_t,
\end{equation}
where $X_t$ is a $d$-dimensional stochastic process with the state space $\Omega \subset \mathbb{R}^d$ being a bounded Lipschitz domain, $\bv: \Omega \to \mathbb{R}^d$ is the drift velocity, $W_t$ is a standard $m$-dimensional Brownian motion, and $\Gamma: \Omega \to \mathbb{R}^{d \times m}$  is the $d\times m$-dimensional diffusion coefficient matrix. We assume $v$ and $\Gamma$ are Lipschitz continuous. 
The solution to \eqref{eq:sde} is given by the integral equation:
\[X_t = X_0 + \int_0^t \bv \paran{ X_s } \mathrm{d} s + \int_0^t \Gamma \paran{X_s} \mathrm{d} W_s .\]

 If the initial condition $X_0$  is random with a probability density $\rho_0$, then the SDE induces a flow of probability densities $\rho(\bx, t)$. The evolution of this density is governed by the FP equation \cite{huang2015steady, Ovchinnikov2016supersymm, baxendale2007stochastic}
\begin{equation} \label{eq:fp-pde}
     \frac{\partial \rho(\bx,t)}{\partial t} = - \nabla  \cdot (\rho(\bx,t) \bv(\bx)) + \nabla \cdot \Big( \nabla \cdot (\Sigma(\bx) \rho(\bx,t))\Big)\triangleq \mathcal{F}\rho(\bx,t), 
\end{equation}
with the initial condition $\rho(\bx,0)=\rho_0(\bx)$ and the constraints:
\begin{align}\label{cond_u}
    \rho(\bx,t) \geq 0, \qquad \int_{ \Omega } \rho(\bx,t) \mathrm{d} \bx = 1.
\end{align}
Here, $\mathcal{F}$ is the FP operator, and $\Sigma$ is the diffusion coefficient matrix with its entries given by
\[\begin{split}
  & \Sigma_{i,j}(\bx) :=\dfrac{1}{2} \SqBrack{\Gamma(\bx) \Gamma(\bx)^T}_{i,j} = \dfrac{1}{2}\left\langle \Gamma_i(\bx), \Gamma_j(\bx) \right\rangle_{ \real^{m} },
\end{split}\]
where $\Gamma_i(\bx)$ denotes the $i$-th row of the matrix $\Gamma(\bx)$. The matrix $\Sigma$ is symmetric and positive semi-definite. To ensure probability conservation within $\Omega$, 
FP equation \eqref{eq:fp-pde} is supplemented with homogeneous Neumann (reflecting) boundary conditions:
\begin{align}
    \label{eq:bc}
    \left( -\bv(\bx)\rho(\bx,t) + \nabla \cdot (\Sigma(\bx)\rho(\bx,t)) \right) \cdot \mathbf{n}(\bx) = 0, \quad \bx \in \partial\Omega,
\end{align}
where $\mathbf{n}(\bx)$ is the outward unit normal to $\partial\Omega$.

A density $\rho(\bx)$ is stationary or invariant if it remains unchanged under this transformation, i.e., $\mathcal{F} \rho = 0$ subject to the constraints \eqref{cond_u}. This represents a statistical equilibrium that balances  the deterministic drift and stochastic diffusion. Furthermore, an invariant density $\rho$ defines  an invariant probability measure $\mu$ on the phase space by $\mu(U) = \int_U \rho(\bx) \mathrm{d} \bx$ for any measurable set $U\subset\Omega$. This measure is invariant in the sense that if $X_0$ is distributed according to $\mu$, then $X_t$ is also distributed according to $\mu$ for all $t > 0$. Thus, the probability of  the process being in any set $U$ remains constant over time. The determination of these stationary densities is of significant practical importance, as they directly identify high- and low-probability regions in the phase space.

In this paper,  we focus on the scenario of constant, isotropic diffusion, where the diffusion matrix takes the form $\Sigma(\bx)=DI$, where $D$ is a positive constant known as the diffusion coefficient, and $I$ is the identity matrix. 
Consequently, the general SDE \eqref{eq:sde} simplifies to
\begin{align}
    \label{eq:sde_D}
    \rd X_t = \bv(X_t)\,\rd t + \sqrt{2D}\,\rd W_t,
\end{align}
which represents a dynamical system driven by isotropic white noise of strength $D$. In the zero-noise limit $D\to 0^+$, the system reduces to the continuity equation modeling the probability flow of the ODE given by $\Dot{\bx} = \bv(\bx)$. For systems exhibiting stochastic stability \cite{StochStable, Young1986StochHyp, CowiesonYoung2005}, the invariant measure depends continuously on the noise level. This property, while difficult to establish theoretically, is widely observed empirically. For such systems, the constant isotropic diffusion model ensures the stationary distribution remains close to that of the noise-free counterpart.

The corresponding FP equation \eqref{eq:fp-pde} then simplifies to
\begin{equation}\label{eq:sp-pde}
     \frac{\partial \rho(\bx,t)}{\partial t} = - \nabla  \cdot (\rho(\bx,t) \bv(\bx)) + D\nabla^2 \rho(\bx,t), \quad \bx\in\Omega,
\end{equation}
associated with the boundary condition
\begin{equation}\label{eq:bc:simple}
(D\nabla\rho-\rho\,\bv)\cdot \bn=0 \quad \text{on }\partial\Omega.
\end{equation}
Under certain conditions \cite{huang2015steady}, the invariant density $\rho(\bx)$ of \eqref{eq:sp-pde} exists and satisfies
\begin{equation}\label{eq:steady-pde}
     \nabla  \cdot (\rho(\bx) \bv(\bx)) = D\nabla^2 \rho(\bx),\quad \bx\in\Omega.
\end{equation}
This formulation implies that if the invariant density $\rho(\bx)$ can be estimated from data, the underlying velocity field $\bv$ can be reconstructed by solving the inverse problem defined by \eqref{eq:steady-pde}. Our proposed method is developed within this paradigm.

\subsection{Score estimation}\label{nscn}
The inverse problem posed by Equation \eqref{eq:steady-pde} requires an accurate estimate of the invariant density $\rho(\bx)$ from data. A critical first step is therefore to obtain a robust approximation of this invariant measure.
In \cite{botvinick2023learning}, this measure is approximated using the \emph{occupation measure}, which is the empirical distribution obtained by binning observed points into histogram. However, this binning approach is well-known to suffer from the curse of dimensionality, rendering it impractical for high-dimensional problems \cite{bellman1961adaptive}. To overcome this limitation, we employ \emph{score matching} techniques, which have demonstrated remarkable success in high-dimensional density estimation \cite{hyvarinen2005estimation,song2020sliced}.

For any continuously differentiable probability density function $\rho(\bx)$, the associated score function $\bs(\bx)$ is defined as 
\begin{equation}\label{eq:score-def}
    \bs(\bx)=\nabla_\bx \log \rho(\bx).
\end{equation}
Estimating the score function is often more tractable than direct density estimation, as it circumvents the computation of intractable normalizing constants \cite{song2020sliced,hyvarinen2005estimation}. Moreover,  the score function provides crucial information about the geometry of the underlying probability distribution and enables various downstream applications, including sample generation through Langevin Monte Carlo dynamics and gradient-based optimization procedures \cite{song2020score,ho2020denoising}.

A score network $\mathbf{s}_{\theta}(\bx)$, parameterized with trainable parameters $\theta$, is used to approximate the ground-truth score function $\nabla_{\bx} \log \rho(\bx)$. The natural objective is to minimize the expected squared error $\frac{1}{2}\mathbb{E}_{\rho(\bx)}||\mathbf{s}_{\theta}(\bx)-\nabla_x \log \rho(\bx)||^2_2$, which defines the classical score matching framework \cite{hyvarinen2005estimation}. However, this formulation cannot be applied directly because the true score function is unknown.

To overcome this challenge, numerous effective score matching variants have been developed in the literature. A widely used approach is denoising score matching \cite{vincent2011connection}, which has become a cornerstone technique in modern generative modeling. Given observed data $\bx \in \Omega$ from the unknown  density  $\rho(\bx)$, the main idea of the denoising score matching is to first perturb it with additive noise according to a conditional distribution $p_{\sigma}(\tilde{\bx}| \bx)$, where $\sigma$ controls the noise level. Score matching is then applied to estimate the score of the noise-perturbed distribution  $p_{\sigma}(\tilde{\bx}) \triangleq \int_{\Omega} p_{\sigma}(\tilde{\bx}| \bx)\rho(\bx)\mathrm{d} \bx$. As proven in \cite{vincent2011connection}, this is equivalent to minimizing the objective:
\begin{equation}\label{eq:denoise}
    \frac{1}{2}\mathbb{E}_{\rho(x)}\mathbb{E}_{p_\sigma(\tilde{x}\mid x)}\left[\|\mathbf{s}_{\theta}(\tilde{\bx})-\nabla_{\tilde{\bx}} \log p_{\sigma}(\tilde{\bx}| \bx)\|^2_2 \right],
\end{equation}
where the expectation over $\rho(\bx)$ is approximated using given data samples $\bx$. The theoretical study in  \cite{vincent2011connection} ensures that, under appropriate regularity conditions, the optimal network $\mathbf{s}_{\theta^*}(\bx)$ that minimizes this objective \eqref{eq:denoise}  satisfies 
\begin{align}
    \label{eq:scoreappro}
    \mathbf{s}_{\theta^*}(\bx) = \nabla_\bx \log p_{\sigma} (\bx), \quad \textnormal{almost surely}.
\end{align}
When the noise level $\sigma$ is sufficiently small such that $p_{\sigma} (\tilde{\bx}) \approx \rho(\tilde{\bx})$, the optimal score network yields a close approximation to the true score function:
$$\mathbf{s}_{\theta^*}(\bx) = \nabla_\bx \log p_{\sigma} (\bx) \approx \nabla_\bx \log \rho (\bx).$$


Despite its theoretical appeal, the single-scale denoising approach in \eqref{eq:denoise} often exhibits unsatisfactory performance in practice, particularly in low density regions where the signal-to-noise ratio is unfavorable \cite{song2019generative}. This issue is addressed by the multi-scale denoising approach \cite{song2019generative}, which perturbs data with multiple noise levels and trains a single conditional score network to estimate scores across all noise scales  simultaneously. This technique has since become a standard  in score-based generative modeling.

The multi-scale denoising approach involves selecting a geometric sequence of noise scales $\{\sigma_i\}_{i=1}^L$, such that
$$\frac{\sigma_1}{\sigma_2} =\cdots = \frac{\sigma_{L-1}}{\sigma_L} = \gamma >1,$$ 
where $\gamma$ is a fixed ratio. To determine the parameters of this sequence ($\sigma_1$, $\gamma$, and $L$), we follow the systematic approach proposed in \cite{song2020improved}, which is based on two  criteria:

\begin{criterion}
Set the largest noise scale $\sigma_1$ to the maximum pairwise Euclidean distance among all training data points. This ensures the Gaussian noise distribution at the largest scale adequately covers the entire support of the data.
\end{criterion}

\begin{criterion}
Choose the  ratio $\gamma$  such that it satisfies
$$\Phi(\sqrt{2d}(\gamma-1)+3\gamma)-\Phi(\sqrt{2d}(\gamma-1)-3\gamma)\approx 0.5,$$ 
where $\Phi$ denotes the standard normal cumulative distribution function. This ensures sufficient overlap between consecutive noise levels for effective training.
\end{criterion}
\noindent The number of scales $L$ is then chosen such that the smallest noise scale $\sigma_L$ is above a threshold (typically $\sigma_L \approx 0.01$), resulting in a well-conditioned noise sequence $\{\sigma_i\}_{i=1}^L$.

At each scale $\sigma_i$, the data is perturbed using a Gaussian distribution $p_{\sigma_i}(\tilde{\bx}\mid \bx) = \mathcal{N}(\tilde{\bx}\mid\bx, \sigma_i^2 \mathbf{I})$, yielding the noise-perturbed distribution $p_{\sigma_i}(\tilde{\bx}) \triangleq \int p_{\sigma_i}(\tilde{\bx}\mid\bx)\rho(\bx)\mathrm{d} \bx$. A single conditional score network $\mathbf{s}_{\theta}(\bx, \sigma)$ is then trained to jointly approximate the
scores of all perturbed distributions, i.e., $\mathbf{s}_{\theta}(\bx, \sigma_i) \approx \nabla_x \log p_{\sigma_i} (\bx)$ for all  $i$. For a fixed noise level $\sigma_i$, the denoising score matching objective derived from \eqref{eq:denoise} is
\begin{equation*}
\frac{1}{2 } \mathbb{E}_{\rho(\bx)} \mathbb{E}_{p_{\sigma_i}(\tilde{\bx} \mid \bx)}\left[\Big\| \mathbf{s}_{\boldsymbol{\theta}}\left(\tilde{\bx}, \sigma_i\right) + \frac{\tilde{\bx}-\bx}{\sigma_i^2}\Big\|_2^2\right].
\end{equation*}
These individual objectives are combined into a unified loss function for all noise scales:
\begin{equation}\label{eq:condition_unify}
\mathcal{L}_s = \frac{1}{2 L} \sum_{i=1}^L \lambda(\sigma_i)\,\mathbb{E}_{\rho(\bx)} \mathbb{E}_{p_{\sigma_i}(\tilde{\bx} \mid \bx)}\left[\Big\| \mathbf{s}_{\boldsymbol{\theta}}\left(\tilde{\bx}, \sigma_i\right)+\frac{\tilde{\bx}-\bx}{\sigma_i^2}\Big\|_2^2\right],
\end{equation}
where $\lambda(\sigma_i)>0$ are scaling coefficients.  Empirically, it is observed that $\|\mathbf{s}_{\theta}(\bx, \sigma)\|_2 \propto 1/\sigma$ for optimally trained networks \cite{song2019generative}, which motivates the choice $\lambda(\sigma_i) = \sigma_i^2$ to balance the loss across scales. Substituting this weighting yields the final objective: 
\begin{equation}\label{eq:condition}
\mathcal{L}_s = \frac{1}{2 L} \sum_{i=1}^L \mathbb{E}_{\rho(\bx)} \mathbb{E}_{p_{\sigma_i}(\tilde{\bx} \mid \bx)}\left[\Big\|\sigma_i \mathbf{s}_{\boldsymbol{\theta}}\left(\tilde{\bx}, \sigma_i\right)+\frac{\tilde{\bx}-\bx}{\sigma_i}\Big\|_2^2\right].
\end{equation}

To facilitate learning across different noise scales and ensure numerical stability, we employ a parameterization of the conditional score network: $\mathbf{s}_{\theta}(\bx,\sigma)=\mathbf{s}_{\theta}(\bx)/\sigma$, where $\mathbf{s}_{\theta}(\bx)$ is an unconditional score network. This reparameterization mitigates the numerical instabilities that arise as $\sigma \to 0$ \cite{song2020improved}, while the lower bound $\sigma_L\approx 0.01$ provides additional regularization.

\subsection{Physics-informed neural networks}
This section reviews the framework of PINNs \cite{raissi2019physics} for solving PDEs. PINNs provide a powerful, mesh-free alternative to traditional numerical methods, particularly for high-dimensional problems and complex geometries.

Consider a PDE of the general form
\begin{equation}\label{eq:pde}
     \mathcal{N}\mathbf{u}(\bx) = \mathbf{f}(\bx), \quad\bx\in \Omega,
\end{equation}
where $\mathcal{N}$ is a nonlinear differential operator, $\mathbf{u}: \mathbb{R}^d \to \mathbb{R}^{m}$ is the unknown solution field, $\mathbf{f}: \mathbb{R}^d \to \mathbb{R}^m$ is a prescribed source function, and $\Omega \subset \mathbb{R}^d$ is the computational domain. Traditional numerical methods, such as finite difference and finite element approaches, rely on spatial discretizations that can become computationally expensive for high-dimensional problems due to the curse of dimensionality \cite{quarteroni2008numerical,brenner2008mathematical}.  In contrast, PINNs \cite{raissi2019physics} employ a neural network $\hat{\mathbf{u}}(\bx;\theta)$ to  approximate the solution directly, where $\theta$ denotes the collection of all trainable network parameters. The key innovation of PINNs is the incorporation of physical constraints into the training process  via automatic differentiation \cite{baydin2017automatic}, enabling the network to learn solutions that satisfy both the governing equations and available measurement data. 

The framework utilizes two distinct point sets: observational data  $\{(\bm y_i,\mathbf{u}_i)\}_{i=1}^{N_u}$ (which may be sparse or absent) and collocation points $\{\bx_j\}_{j=1}^{N_f}$ sampled throughout the domain  $\Omega$. 
The training process  minimizes a composite physics-informed loss function:
\begin{equation}\label{eq:pinn-loss}
    \mathcal{L} = \omega_1 \mathcal{L}_{\text{data}}+  \omega_2 \mathcal{L}_{\text{PDE}},
\end{equation}
with the data fidelity term 
\[\mathcal{L}_{\text{data}} = \frac{1}{N_u}\sum_{i=1}^{N_u}\|\hat{\mathbf{u}}(\bm y_i;\theta)-\mathbf{u}_i\|^2_2,\]
and the physics-based residual term
$$\mathcal{L}_{\text{PDE}} = \frac{1}{N_f}\sum_{j=1}^{N_f}\| \mathcal{N}\hat{\mathbf{u}}(\bx_j;\theta)-\mathbf{f}(\bx_j)\|^2_2.$$
The weighting coefficients $\omega_1$ and $\omega_2$  balance contributions of  data fitting and  physics enforcement.

The optimization of the loss function \eqref{eq:pinn-loss} is typically performed using gradient-based algorithms such as the Adam optimizer \cite{kingma2014adam} or L-BFGS \cite{liu1989limited}, leveraging the automatic differentiation capabilities of modern deep learning frameworks. The trained network $\hat{\mathbf{u}}(\bx;\theta^*)$ provides a mesh-free approximation of the  solution that can be evaluated at arbitrary points within the domain. Beyond forward problems, PINNs have demonstrated remarkable effectiveness in addressing inverse problems, where unknown parameters or functions in the governing equations are inferred from limited data \cite{kadeethum2020physics,depina2022application,guo2023high}. In this work, we leverage this capability  to address the challenging task of reconstructing unknown velocity \(\bv\) in \eqref{eq:steady-pde} from observed trajectory data.


\section{PINN-IMSM: PINN with invariant measure score matching}\label{sec:method}
This section introduce PINN-IMSM, a novel mesh-free machine learning framework that combines PINN with invariant measure score matching to reconstruct dynamical systems from noisy trajectory observations.
The key innovation of PINN-IMSM lies in its ability to learn velocity fields without requiring explicit time labels, making it particularly suitable for real-world applications where temporal information is incomplete or unavailable. Our approach addresses three fundamental challenges in dynamical system reconstruction: (i) handling intrinsic noise in trajectory data, (ii) circumventing the curse of dimensionality inherent in traditional mesh-based methods, and (iii) ensuring well-posed reconstruction in high-dimensional spaces.

\subsection{Problem reformulation and method overview}
The theoretical foundation of PINN-IMSM rests on utilizing the steady-state FP equation \eqref{eq:steady-pde} as a forward model to characterize the invariant measure generated by the observed trajectories. 

A central component of PINN-IMSM is the use of score functions rather than density functions for invariant measure estimation. We reformulate the steady-state FP equation \eqref{eq:steady-pde} by substituting the score definition \eqref{eq:score-def}, yielding
\begin{equation}\label{eq:score-pde}
\bs(\bx) \cdot \bv(\bx) + \nabla \cdot \bv(\bx) = D \left( |\bs(\bx)|^2 + \nabla \cdot \bs(\bx) \right),\quad \bx\in\Omega.
\end{equation}
In this formulation, the diffusion term serves a dual purpose: it accommodates intrinsic noise while preventing overfitting to noise components in the velocity field $\bv(\bx)$, and simultaneously provides natural regularization. This approach distinguishes PINN-IMSM from existing mesh-based finite volume approaches \cite{botvinick2023learning} and enables the effective capture of complex dynamical behaviors in high-dimensional spaces. 

Given trajectory measurements $\{\widetilde{X}_i\}_{i=1}^{N}$,  interpreted as samples from stochastic dynamical systems \eqref{eq:sde}, PINN-IMSM reconstructs the associated velocity field $\bv(\bx)$ through a two-stage process:
\begin{itemize}
    \item {\bf Stage 1:}  Train a score network $\bs_{\theta_1}(\bx, \sigma_L)$ using denoising score matching to approximate the score function $\bs(\bx)$.
    \item {\bf Stage 2:} Reconstruct the velocity field $\bv(\bx)$ through a physics-informed approach based on Equation \eqref{eq:score-pde}.
\end{itemize}

\subsection{Score matching for invariant measures}
\label{sec:m-score}
The first stage of our methodology employs a noise-conditional score network $\bs_{\theta_1}(\bx, \sigma_L)$ to approximate the score function $\bs(\bx) = \nabla_{\bx} \log \rho(\bx)$, where $\rho(\bx)$ is the density of the invariant measure and $\theta_1$ denotes the trainable parameters.  The network is trained using the multi-scale denoising score matching framework detailed in Section \ref{nscn} by minimizing the loss function \eqref{eq:condition} across a well-conditioned geometric sequence of noise levels $\{\sigma_i\}_{i=1}^L$.

We treat the trajectory measurements $\{\widetilde{X}_i\}_{i=1}^{N}$ as time-agnostic samples from the invariant measure, forming the empirical distribution $\widehat{\rho}_N \coloneqq \tfrac{1}{N}\sum_{i=1}^{N}\delta_{\widetilde{X}_i}$ on $\Omega$, where $\delta_{\widetilde{X}_i}$ denotes the Dirac mass at $\widetilde{X}_i$. 
During training, we repeatedly sample data points $\bx \sim \widehat{\rho}_N$, 
pick a noise level $\sigma_i$ from the fixed noise sequence $\{\sigma_i\}_{i=1}^L$, 
and draw a standard Gaussian perturbation $\boldsymbol{\xi} \sim \mathcal{N}(\mathbf{0},I_d)$. 
For each pair $(\bx,\sigma_i)$ we then form the perturbed sample
\[
\tilde{\bx} = \bx + \sigma_i \boldsymbol{\xi},
\]
and assemble mini-batches of such triplets $(\bx,\tilde{\bx},\sigma_i)$ to evaluate and minimize 
the multi-scale denoising score matching loss~\eqref{eq:condition}.

The key insight is the selection of a sufficiently small smallest noise level $\sigma_L$ such that the perturbed density approximates the density of the invariant measure, i.e., $p_{\sigma_L}(\bx) \approx \rho(\bx)$. 
 This choice, combined with the theoretical guarantees of  score matching \eqref{eq:scoreappro}, leads to the following approximation:
\begin{equation}
\bs_{\theta_1^*}(\bx, \sigma_L) \approx \nabla_x \log p_{\sigma_L} (\bx) \approx \nabla_x \log \rho (\bx) = \bs(\bx),
\end{equation}
which provides PINN-IMSM with a robust estimate of the score function \(\bs(\bx)\) for the underlying dynamical system. To ensure compatibility with automatic differentiation in the subsequent physics-informed learning stage, the score network utilizes smooth activation functions, specifically the swish activation \cite{eger2018time}. 

With the reliable estimate $\bs_{\theta_1^*}(\bx,\sigma_L)\approx \bs(\bx)$ obtained, we proceed to the second stage, which leverages the score-based reformulation \eqref{eq:score-pde} to recover the underlying velocity field.

\subsection{PINN-based velocity field reconstruction}
\label{sec:m-pinn}
The second stage of PINN-IMSM constructs a neural network $\bv_{\theta_2}(\bx)$ parameterized by $\theta_2$ to reconstruct the velocity field $\bv(\bx)$ using the score estimate obtained in the first stage. 

\subsubsection{The ill-posedness challenge in the standard PINN approach}

We begin by reformulating Equation \eqref{eq:score-pde} in compact operator form:
\begin{equation}\label{eq:compact-op}
\widetilde{\mathcal{N}}(\bs(\bx), \bv(\bx)) = \bs(\bx) \cdot \bv(\bx) + \nabla \cdot \bv(\bx) - D \left( |\bs(\bx)|^2 + \nabla \cdot \bs(\bx)\right)=0.
\end{equation}

The standard PINN approach trains the neural network $\bv_{\theta_2}(\bx)$   by minimizing  the physics-informed loss function over  the trajectory measurements $\{\widetilde{X}_i\}_{i=1}^{N}$:
\begin{equation}\label{eq:dp-loss}
    \mathcal{L}_{PDE} = \frac{1}{N}\sum_{i=1}^{N}\| \widetilde{\mathcal{N}}(\bs_{\theta_1^*}(\widetilde{X}_i, \sigma_L) ,\bv(\widetilde{X}_i)) \|^2_2,
\end{equation}
where  $\bs_{\theta_1^*}(\widetilde{X}_i,\sigma_L)$ is  the pre-trained score estimate. The optimized network $\bv_{\theta^*_2}(\bx)$ then approximates the velocity field $\bv(\bx)$, as illustrated in Figure \ref{fig:frame}.
 \begin{figure}[!t]
\centerline{\includegraphics[width=0.95\textwidth]{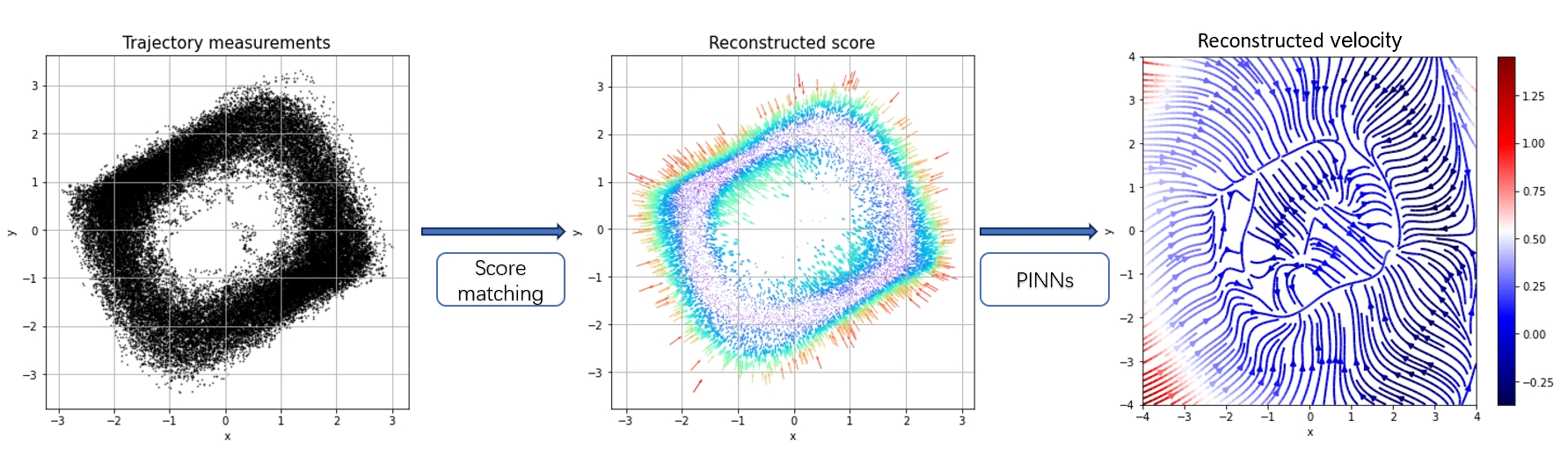}}
\caption{Velocity field reconstruction from trajectory data without explicit time labels. From trajectory measurements (left), we first reconstruct the score function via denoising score matching (middle). This reconstructed score is then integrated into the PINN framework to infer the velocity field across the state space (right).}
      \label{fig:frame}
\end{figure} 

Despite this straightforward formulation, the standard PINN approach faces several challenges. The primary issue is ill-posedness: Equation \eqref{eq:score-pde} may admit multiple velocity field solutions $\bv(\bx)$ for a given score function $\bs(\bx)$, particularly in high-dimensional settings. Furthermore, the absence of explicit data-fitting terms in the loss function complicates optimization, making minimization of \eqref{eq:dp-loss} particularly difficult \cite{raissi2020hidden,goodfellow2016deep}. These limitations motivate our reformulation as a constrained optimization problem to ensure well-posedness and stability.
\subsubsection{Ensuring well-posedness: PDE-constrained optimization approach}
\label{sec:m-constrained}
To address the ill-posedness of the velocity reconstruction problem identified in the previous section, we reformulate it as the following PDE-constrained optimization problem:
\begin{equation}\label{eq:true-prob}
\begin{aligned}
&\underset{\bv}{\text{minimize}} \quad  ||\bv(\bx)||^2 \\
&\text{{subject to}} \quad   \widetilde{\mathcal{N}}(\mathbf{s}(\bx),\bv(\bx))=0,
\end{aligned}
\end{equation}
where $||\bv(\bx)||^2=\int_{\Omega}|\bv(\bx)|^2 \, d\bx$. This formulation selects the minimal-energy solution among all feasible solutions consistent with the score-based FP constraint.


The well-posedness of this constrained optimization approach is established by the following theorem. The required preliminaries including definitions of the relevant function spaces and norms, together with a complete proof are provided in Appendix \ref{sec:appendix}.

\begin{theorem}[Well-posedness of \eqref{eq:true-prob}]\label{them:unique} Assume $\bs\in W^{1,\infty}(\Omega;\mathbb{R}^d)$
and that $q_{\bs}=|\bs|^2+\nabla\cdot\bs\ge0$ almost everywhere in $\Omega$. Then the constrained optimization problem \eqref{eq:true-prob} admits a \emph{unique} minimizer $\bv^*$. Moreover, the solution map $\bs\mapsto \bv^*$ is locally Lipschitz with respect to the $W^{1,\infty}$-metric on $\bs$ and the $L^2$-metric on $\bv$, i.e., there exist a neighborhood $U$ of $\bs$ and a constant $C>0$ such that, for any $\widetilde{\bs}\in U$, the corresponding minimizer $\widetilde{\bv}^*$ satisfies \[ \|\bv^*-\widetilde{\bv}^*\|_{L^2(\Omega)} \;\le\; C\,\|\bs-\widetilde{\bs}\|_{W^{1,\infty}(\Omega)}. \] 
\end{theorem}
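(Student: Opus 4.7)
The plan is to view the problem as a quadratic minimization over a closed affine subspace of the Hilbert space $L^2(\Omega;\mathbb{R}^d)$, deriving existence and uniqueness from the Hilbert projection theorem, and obtaining the Lipschitz estimate via a Lagrangian (adjoint) characterization of the minimizer combined with standard elliptic regularity.

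First I would recast the constraint $\widetilde{\mathcal{N}}(\bs,\bv)=0$ as a linear equation $A_{\bs}\bv = Dq_{\bs}$, where $A_{\bs}\bv := \nabla\cdot\bv + \bs\cdot\bv$ is a continuous linear map from a suitable closed subspace $V\subset H(\mathrm{div};\Omega)$ (encoding the reflecting boundary behavior inherited from the FP equation) into $L^2(\Omega)$. Because $\bs\in L^\infty$, the zero-order term is $L^2$-continuous and the divergence passes to distributional limits, so the feasible set $F_{\bs} := A_{\bs}^{-1}(Dq_{\bs})$ is a closed affine subspace of $L^2(\Omega;\mathbb{R}^d)$; a one-line check shows $\bv_0 = D\bs \in F_{\bs}$, giving nonemptiness. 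Since $\|\bv\|_{L^2}^2$ is strictly convex and coercive on $L^2$, the Hilbert projection theorem yields a unique minimizer $\bv^* = P_{F_{\bs}}(0)$.

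For the quantitative estimate I would derive a Lagrangian characterization. Computing the $L^2$-adjoint $A_{\bs}^*\mu = -\nabla\mu + \bs\mu$ via integration by parts (absorbing boundary contributions into a Dirichlet condition on $\mu$ compatible with $V$), the first-order condition $2\bv^* + A_{\bs}^*\mu = 0$ gives $\bv^* = \nabla\phi - \bs\phi$ for a scalar potential $\phi$ satisfying
\begin{equation*}
-\Delta\phi + q_{\bs}\,\phi = -D\,q_{\bs} \quad\text{in }\Omega,
\end{equation*}
with Dirichlet data on $\partial\Omega$. Under the hypothesis $q_{\bs}\ge 0$ a.e., combined with Poincar\'e's inequality, the bilinear form $\int_\Omega(\nabla\phi\cdot\nabla\psi + q_{\bs}\phi\psi)\,d\bx$ is coercive on $H_0^1(\Omega)$, so Lax--Milgram produces a unique $\phi\in H_0^1(\Omega)$ with $\|\phi\|_{H^1}\le C\|q_{\bs}\|_{L^2}$.

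For local Lipschitz continuity, let $\bs,\widetilde\bs$ lie in a small $W^{1,\infty}$-neighborhood $U$ with corresponding potentials $\phi,\widetilde\phi$. Subtracting the two elliptic PDEs yields
\begin{equation*}
-\Delta(\phi-\widetilde\phi) + q_{\bs}(\phi-\widetilde\phi) = -(q_{\bs}-q_{\widetilde\bs})(\widetilde\phi + D).
\end{equation*}
Since $\bs\mapsto q_{\bs}=|\bs|^2+\nabla\cdot\bs$ is locally Lipschitz from $W^{1,\infty}$ into $L^\infty$, the right-hand side is $O(\|\bs-\widetilde\bs\|_{W^{1,\infty}})$ in $L^2$, and Lax--Milgram (with a coercivity constant uniform on $U$) delivers $\|\phi-\widetilde\phi\|_{H^1}\le C\|\bs-\widetilde\bs\|_{W^{1,\infty}}$. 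The decomposition $\bv^*-\widetilde\bv^* = \nabla(\phi-\widetilde\phi) - \bs(\phi-\widetilde\phi) - (\bs-\widetilde\bs)\widetilde\phi$ together with the $L^\infty$ bounds on $\bs$ and $\widetilde\phi$ then yields the required $L^2$ estimate. The main technical obstacle is the consistent alignment of the function-space triple---the admissible class $V$ for $\bv$, the Dirichlet space for the Lagrange multiplier $\phi$, and the boundary behavior dictated by the reflecting FP conditions---so that $F_{\bs}$ is nonempty and closed, $A_{\bs}^*$ has a clean form, and $-\Delta+q_{\bs}$ is uniformly coercive on a $W^{1,\infty}$-neighborhood; once these are in place the estimates above are routine applications of Lax--Milgram and the product-rule Lipschitz bound on $q_{\bs}$.
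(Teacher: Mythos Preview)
Your approach is essentially the paper's: both obtain existence/uniqueness from the closed affine structure of the feasible set together with strict convexity of $\|\bv\|_{L^2}^2$ (you phrase it via Hilbert projection, the paper via a minimizing sequence---these are equivalent here), both use the same nonemptiness witness $\bv_0=D\bs$, both derive the Lagrangian representation $\bv^*=\nabla\phi-\bs\phi$, and both reduce the stability question to the scalar elliptic problem $-\Delta\phi+q_{\bs}\phi=-Dq_{\bs}$.

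Two differences are worth flagging. First, you place the multiplier in $H_0^1(\Omega)$ (Dirichlet), whereas the paper places it in $H^1_\diamond(\Omega)$ (homogeneous Neumann plus mean zero). The Neumann choice is the one consistent with the tangent-space condition $\phi\cdot\bn=0$ that arises from the reflecting FP boundary you yourself build into $V$: with that constraint the boundary term in $(\lambda,\nabla\!\cdot\phi)$ already vanishes for \emph{any} $\lambda\in H^1$, so nothing forces $\lambda|_{\partial\Omega}=0$. Your Dirichlet setup would be the right one if $V=H(\mathrm{div};\Omega)$ with \emph{no} boundary constraint on $\bv$; as written, the pairing ``$V$ encodes reflecting BC'' $+$ ``$\phi\in H_0^1$'' is slightly mismatched, though you correctly identify this alignment as the crux. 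Second, for the Lipschitz estimate the paper invokes the Banach-space Implicit Function Theorem on $F(\bs,\lambda)=-\Delta\lambda+q_{\bs}\lambda+Dq_{\bs}$, while you subtract the two elliptic equations and apply Lax--Milgram directly to the difference. Your route is more elementary and yields exactly the bound needed; the paper's buys $C^1$ dependence of $\bs\mapsto\lambda$ for free but uses only the Lipschitz consequence.
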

\begin{rem}
 The condition $q_{\bs}\ge0$  is sufficient is sufficient for the theorem. The conclusion
remains valid under the weaker condition $\|q_{\bs}^-\|_{L^\infty(\Omega)} < C_P^{-2}$, 
where $q_{\bs}^-(x)=\max\{-q_{\bs}(x),0\}$ and $C_P$ is the Poincar\'e constant for $H^1_\diamond(\Omega)$.
\end{rem}
This theorem guarantees that the minimal-energy solution exists, is unique, and depends continuously on the score function $\bs$. The Lipschitz continuity ensures that the reconstruction of the velocity field is stable with respect to perturbations in the score function. Specifically, if we approximate the true score $\bs$ by an estimate $\widetilde{\bs}$ obtained from data (e.g., via score matching), the error in the reconstructed velocity field $\|\bv^*-\widetilde{\bv}^*\|_{L^2}$ is linearly controlled by the estimation error $\|\bs-\widetilde{\bs}\|_{W^{1,\infty}}$. This provides a theoretical guarantee that an accurate score estimate leads to an accurate reconstruction of the underlying dynamics, which is crucial when the score is learned from finite samples.

In practice, we work with the data-driven surrogate of \eqref{eq:true-prob} obtained by replacing the true score $\bs$ with its learned approximation $\bs_{\theta_1^*}$:
\begin{equation}\label{eq:sub-prob}
\begin{aligned}
&\underset{\bv}{\text{minimize}} \quad  \|\bv(\bx)\|^2 \\
&\text{{subject to}} \quad   \widetilde{\mathcal{N}}(\mathbf{s}_{\theta_1^*}(\bx),\bv(\bx))=0.
\end{aligned}
\end{equation}

Given that neural networks with smooth activation functions  yield $C^{m}$-smooth approximations, the learned score function $\bs_{\theta_1^*}(\bx) \in C^{m}(\Omega)$ satisfies the regularity assumptions of Theorem~\ref{them:unique}. Thus, the surrogate problem \eqref{eq:sub-prob} admits a unique solution $\widehat{\bv}$ and enjoys Lipschitz stability. In particular, an $\mathcal{O}(\varepsilon)$ error in $\bs_{\theta_1^*}$ (measured in $W^{1,\infty}$) leads to an $\mathcal{O}(\varepsilon)$ error in the reconstructed velocity field (measured in $L^2$). As learning improves $\bs_{\theta_1^*}$ and the minimal-energy constraint regularizes residual high-frequency noise, $\widehat{\bv}$ converges to the ideal solution $\bv^*$.

The theoretical foundation for our approach leverages the Universal Approximation Theorem \cite{hornik1989multilayer,chen1995universal}, ensuring that neural networks with sufficiently smooth activation functions can approximate any smooth function with arbitrary precision. Thus, solving problem \eqref{eq:sub-prob} using neural network architectures provides a robust computational approach to approximating the velocity field solution $\bv^*$.

\subsubsection{Numerical implementation: stochastic augmented Lagrangian method}
With the well-posedness of the constrained optimization problem \eqref{eq:sub-prob} established in Theorem~\ref{them:unique}, we now consider its numerical implementation using neural networks.
Several approaches have been proposed for constrained learning in physics-informed settings including soft penalty methods \cite{raissi2019physics}, penalty-based hard constraint formulations \cite{lu_physics-informed_2021},  adversarial training frameworks  \cite{zang2020weak}, and stochastic augmented Lagrangian methods \cite{dener2020training}. Among these, we adopt the stochastic augmented Lagrangian approach introduced in \cite{dener2020training}, originally developed  for physics-constrained training of encoder-decoder networks to approximate the Fokker-Planck-Landau collision operator. This approach builds upon the classical augmented Lagrangian framework,  originally known as the method of multipliers and first proposed by \cite{hestenes1969multiplier} and \cite{powell1969method} for nonlinear constrained optimization problems.

We adapt the stochastic augmented Lagrangian approach to our score-based PDE-constrained optimization formulation \eqref{eq:sub-prob}. Rather than approximating a known physical operator while preserving conservation laws, we aim to reconstruct an unknown minimum-energy velocity field from unlabeled trajectory data through a PDE constraint involving a separately learned score function.

Specifically, the augmented Lagrangian approach  reformulates the PDE-constrained optimization problem \eqref{eq:sub-prob} into a sequence of unconstrained optimization problems indexed by $k$:
\begin{equation}\label{eq:unconstrained}
\underset{\theta_2}{\text{minimize}} \quad \mathcal{L}(\theta_2,\lambda_k,\mu_k),
\end{equation}
with $\mathcal{L}$ being the augmented Lagrangian merit function:
\begin{equation}\label{eq:lag}
    \mathcal{L}(\theta_2,\lambda_k,\mu_k)=\|\bv_{\theta_2}\|^2+\lambda_k  \widetilde{\mathcal{N}}(\theta_2)+\frac{\mu_k}{2}\| \widetilde{\mathcal{N}}(\theta_2)\|^2,
\end{equation}
where $\lambda_k$ is the Lagrange multiplier and $\mu_k$ is the augmented Lagrangian penalty factor. 
Here, $\widetilde{\mathcal{N}}(\theta_2) := \widetilde{\mathcal{N}}(\bs_{\theta_1^*}, \bv_{\theta_2})$ denotes the PDE residual from \eqref{eq:compact-op}, evaluated using the score estimate $\bs_{\theta_1^*}$ obtained in the first stage and  the parameterized velocity field $\bv_{\theta_2}$.  We remark  that for the constrained formulation,   the original PINN loss \eqref{eq:dp-loss}  is no longer minimized directly. Instead, the physics residual appears as a constraint in \eqref{eq:sub-prob}, which is handled through the augmented Lagrangian framework in \eqref{eq:unconstrained}-\eqref{eq:lag}.

The stochastic augmented Lagrangian method employs a two-level nested loop structure: the inner loop uses stochastic gradient descent (SGD) to solve the unconstrained optimization problem \eqref{eq:unconstrained}, while the outer loop  updates Lagrange multipliers and penalty parameters in \eqref{eq:lag}.

\paragraph{Inner-loop optimization via stochastic gradient descent}
We  solve the unconstrained optimization problem \eqref{eq:unconstrained} using mini-batch SGD, adapting the approach from \cite{dener2020training} to our PDE-constrained setting.

Classical augmented Lagrangian methods solve the inner optimization problem \eqref{eq:unconstrained} to a gradient-norm tolerance $w_k$, i.e.,
\begin{equation}
\|\nabla_{\theta_2} \mathcal{L}(\theta_{2,k},\lambda_k,\mu_k)\|_2 \leq w_k,
\end{equation}
with  $w_k$ gradually tightened as the algorithm converges toward the constrained solution. However, this criterion is impractical in stochastic settings for two reasons: first, accurately estimating the optimality norm $\|\nabla_{\theta_2} {\mathcal{L}}\|_2$ is infeasible  with stochastic mini-batch gradients; second, gradient norms do not serve as meaningful termination criteria in machine learning, where prediction accuracy of the neural network is the primary objective.

Instead of checking convergence via gradient norms, the  SGD algorithm  is configured to iterate through  the entire training dataset once per outer-loop iteration.  This yields a fixed number of SGD iterations in each inner loop:
\begin{equation}
N_\text{iter} = \frac{N}{N_b},
\end{equation}
where $N$  is the training dataset size, and $N_b$ is the batch size.  We further introduce  a user-specified parameter $N_\text{aug}$ that controls the number of outer iterations  performed for each random permutation of the training dataset. This enables multiple optimization passes over identical datasets and batches before new random batches are generated, with each pass solving a distinct inner optimization problem characterized by updated  Lagrange multiplier $\lambda_k$ and penalty factor $\mu_k$.

\paragraph{Outer-loop updating of multipliers and penalties} 
We update the Lagrange multiplier $\lambda_k$ and penalty parameter $\mu_k$ in the outer loop based on the progress of inner-loop optimization. Traditional augmented Lagrangian methods employ dynamic constraint tolerances to guide these updates, presupposing that each inner optimization problem consistently achieves the required dynamic tolerance. However, since our inner-loop solves the optimization problem via SGD with a fixed number of iterations,  such convergence cannot be guaranteed.

We adopt a  criterion based on  sufficient reduction in PDE constraint violation, following the approach in \cite{dener2020training}. Specifically, the multiplier is updated when the current PDE residual shows sufficient improvement relative to the best value recorded thus far:
\begin{equation}\label{eq:criteria}
\|\widetilde{\mathcal{N}}(\theta_2^{k+1})\|_2 \leq \eta \|\widetilde{\mathcal{N}}_\text{best}\|_2,
\end{equation}
where $\eta\in(0,1)$ is a reduction factor  and $\widetilde{\mathcal{N}}_\text{best}$ tracks the smallest PDE residual achieved during training.
When this condition is satisfied, the multiplier follows the standard update:
\begin{equation}
\lambda_{k+1} = \lambda_k + \mu_k \widetilde{\mathcal{N}}(\theta_{2}^{k+1}).
\end{equation}
Otherwise, the penalty parameter is increased: 
\begin{equation}
\mu_{k+1} = \min(a \mu_{k}, \mu_{\max}),
\end{equation}
where $a>1$ is a scalar parameter, and $\mu_{\max}$ is an upper bound to prevent numerical instability.

We end this subsection by summarizing the complete stochastic augmented Lagrangian algorithm. 
First, we specify the number of random permutations of the training dataset, $N_{\text{shuffle}}$. 
Then, for each permutation, we perform $N_{\text{aug}}$ outer iterations; each outer iteration consists of:
\begin{itemize}
\item \textbf{Inner loop:} $N_\text{iter}$ SGD updates of the neural network parameters $\theta_2$.
\item \textbf{Outer update:} Update the Lagrange multipliers and penalty terms based on 
      the constraint satisfaction criterion~\eqref{eq:criteria}.
\end{itemize}

\subsection{Summary of the PINN-IMSM framework}
\label{sec:framework-summary}

Algorithm \ref{alg} outlines the complete PINN-IMSM procedure for reconstructing the velocity field $v(\bx)$ from the score-based formulation \eqref{eq:score-pde}. The method proceeds in two sequential stages, using only the trajectory measurements $\{\widetilde{X}_i\}_{i=1}^{N}$ as training data:
\begin{itemize}
    \item \textbf{Stage 1 (score estimation):} The score network $\bs_{\theta_1}(\bx)$ is trained on the trajectory data  by minimizing the multi-scale denoising score matching objective \eqref{eq:condition}. This yields the approximation $\bs_{\theta_1^*}(\bx, \sigma_L) \approx \bs(\bx) = \nabla_{\bx}\log\rho(\bx)$.
    
    \item \textbf{Stage 2 (velocity reconstruction):} With the pre-trained score estimate $\bs_{\theta_1^*}$ fixed, the velocity network $\bv_{\theta_2}(\bx)$ is trained using the same trajectory data.  This is achieved by solving the PDE-constrained optimization problem \eqref{eq:sub-prob} via the stochastic augmented Lagrangian framework. The solution to this problem inherently satisfies the physics constraint \eqref{eq:score-pde} and, by Theorem~\ref{them:unique}, corresponds to the unique minimum $L^2$-norm velocity field.
\end{itemize}

\paragraph{Remark on implementation} In Stage 2, the stochastic augmented Lagrangian method introduces several hyperparameters, such as $\eta$ and $a$. However, the training results remain relatively insensitive to their specific choices. This is due to the dynamic update of the Lagrange multiplier and penalty factor, governed by the convergence metric \eqref{eq:criteria}.  Moreover,  the augmented Lagrangian merit function \eqref{eq:lag} is easier to optimize compared to the direct PDE loss \eqref{eq:dp-loss}, thereby helping to mitigate common training difficulties encountered in standard PINN formulations. Numerical results in Section \ref{sec:num} show that this approach consistently achieves smaller PDE residuals than the standard PINN method.
\begin{algorithm}[!hbtp]
\caption{PINN-IMSM for velocity field reconstruction from unlabeled trajectory measurements. }\label{alg}
\begin{algorithmic}
\vspace{.05in}
\Require trajectory measurements $\{\widetilde{X}_i\}_{i=1}^{N}$, maximum number of epochs $K$ for the score network, update tolerance $\eta$, convergence tolerance $\epsilon$, initial penalty $\mu_\text{init}$, penalty update factor $a$, penalty upper bound  $\mu_{\max}$, batch size $N_b$, number of random shuffles $N_{\text{shuffle}}$ and
 number of iterations per shuffle $N_\text{aug}$.
\vspace{.05in}
\Ensure  {\emph{ {\bf{\emph{optimized}}} score network parameter set $ \theta_1^*$}}
\State choose noise scales $\{\sigma_i\}_{i=1}^L$; initialize $\theta_1^0$ from normal distribution around zero.
\For{$k =1 ,2,\dots, K$}
   \State compute the loss $\mathcal{L}_s(\theta_{1}^{k-1})$ via \eqref{eq:condition} 
   \State update $\theta_{1}^{k} \gets \underset{\substack{\theta_1}}{\text{argmin}} \;\mathcal{L}_s(\theta_{1}) $
\EndFor
\Ensure  {\emph{ {\bf{\emph{optimized}}} velocity network parameter set $ \theta_2^*$}}
\State $\lambda_0 \leftarrow 0$; initialize $\theta_{2}^{0}$ from a normal distribution around zero; set $ \widetilde{\mathcal{N}}_\text{best} \leftarrow  \widetilde{\mathcal{N}}(\theta_{2}^{0})$
\For{$j = 1, 2,\dots, N_{\text{shuffle}}$}
\State Randomly shuffle the training data and form batches of size $N_b$
\State $\mu_0 \gets \mu_\text{init}*(j+1)$
    \For{$k=1,2,\dots,N_\text{aug}$}
    \State solve \eqref{eq:lag} using SGD
    \State update $\theta_{2}^k \gets \underset{\substack{\theta_2}}{\text{argmin}} \left( ||\bv(\theta_2)||^2+\lambda_{k-1}^T  \widetilde{\mathcal{N}}(\theta_2)+\frac{\mu_{k-1}}{2}|| \widetilde{\mathcal{N}}(\theta_2)||^2\right)$
    \If{$|| \widetilde{\mathcal{N}}(\theta_{2}^{k})||_2\leq \eta || \widetilde{\mathcal{N}}_\text{best}||_2$}
        \If{$|| \widetilde{\mathcal{N}}(\theta_{2}^{k})||_2 \leq \epsilon$}
        \State $\bf{Solution}$ found; terminate with $\theta_2^* = \theta_{2}^{k}$
        \EndIf
        \State $\lambda_{k} \gets \lambda_{k-1} +\mu_{k-1}  \widetilde{\mathcal{N}}(\theta_{2}^{k}) $
        \State $ \widetilde{\mathcal{N}}_\text{best} \leftarrow  \widetilde{\mathcal{N}}(\theta_{2}^{k})$
        \State $\mu_{k} \leftarrow \mu_{k-1}$
    \Else
     \State $\lambda_{k} \leftarrow \lambda_{k-1}$
     \State $\mu_{k} \gets \min(a*\mu_{k-1},\mu_{\max})$
    \EndIf
    \EndFor
    \State $\lambda_0 \gets \lambda_{N_\text{aug}}$
\EndFor
\end{algorithmic}
\end{algorithm}

The logical flow of the overall PINN-IMSM pipeline is summarized in Figure \ref{fig:outline}.

\begin{figure}[!htbp]
    \centering
    \begin{tikzpicture}[scale=0.85, transform shape]
        \node [style={rect2}] (n1) at (0\columnA, 0\rowA) {SDE \eqref{eq:sde}  with constant isotropic diffusion};
        \node [style={rect3}] (n2) at (1.2\columnA, 1\rowA) {Steady-state FP equation \eqref{eq:steady-pde}};
        \node [style={rect4}] (n3) at (2.4\columnA, 0\rowA) { Score-based FP equation \eqref{eq:score-pde} };
        \node [style={rect5}] (n4) at (2.4\columnA, -2\rowA) {Score estimate };
        \node [style={rect6}] (n5) at (1.3\columnA, -3\rowA) {PDE-constrained optimization problem \eqref{eq:sub-prob} with learned score};
        \node [style={rect7}] (n6) at (0\columnA, -2.5\rowA) {Velocity reconstruction};
        
        \path [line3] (n1) -- node [midway, above, align=center ] {$\begin{array}{c} \mbox{invariant} \\ \mbox{ density} \end{array}$} (n2);
        \path [line3] (n2) -- node [midway, above, align=center ] { $\begin{array}{c} \mbox{score function}  \end{array}$ } (n3);
        \path [line3] (n3) -- node [midway, above, align=center ] { denoising score matching} (n4);
        \path [line3] (n4) -- node [midway, above, align=center ] {Well-posedness} node [midway, below, xshift=1.2cm, align=left ] { by Theorem \ref{them:unique}  } (n5);
        \path [line3] (n5) -- 
  node[pos=0.52, above, align=center] {stochastic augmented}
  node[pos=0.52, below, align=center] {Lagrangian approach}
(n6);
        \path [line3, dotted] (n6) -- node [midway, right, align=center ] { Algorithm \ref{alg} } (n1);
    \end{tikzpicture}
    \caption{Schematic outline of the PINN-IMSM workflow. The goal is to reconstruct the drift term of an SDE from unlabeled trajectory data by combining invariant measure score matching with PINNs. Starting from an SDE with constant isotropic diffusion, the framework transforms the problem into a steady-state FP equation, which is then reformulated using the score function of the invariant density. The method proceeds in two stages: first, estimating the score function via multi-scale denoising score matching; second, reconstructing the velocity field by solving a well-posed PDE-constrained optimization problem within the stochastic augmented Lagrangian framework.
    }
    \label{fig:outline}
\end{figure}
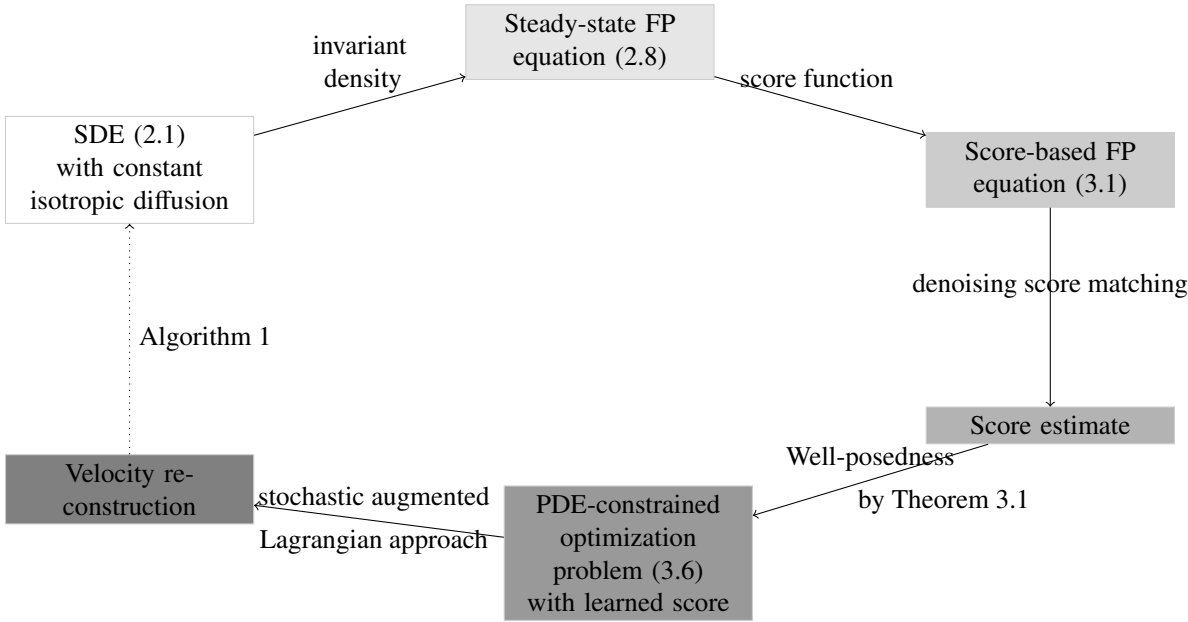

\section{Numerical results}\label{sec:num}
This section presents numerical examples to demonstrate the performance of the proposed PINN-IMSM framework on several dynamical systems. The goal is to evaluate how well it can recover the underlying velocity field from noisy, unlabeled trajectory data.

The method employs two neural networks. The score network $\bs_{\theta_1}$ is a six-layer multilayer perceptron (MLP) with 64 neurons per hidden layer, and the velocity network $\bv_{\theta_2}$ is a six-layer MLP with 128 neurons per hidden layer. Both networks employ the Swish activation function \cite{eger2018time} and are optimized using Adam \cite{Ilya_fix_2017}. These architectural choices, determined empirically,  yield stable performance across all experiments. 

 For each example, the training data are generated  by simulating the SDE \eqref{eq:sde_D}, where the true drift velocity is given in the ODE form $\dot{\bx}=\bv(\bx)$, and the diffusion coefficient $D$ controls the noise level.  We integrate the SDE numerically using the Euler–Maruyama method: given a time step $\Delta t$, the trajectory is updated as
\begin{align}
     X_{j+1}=X_j+\bv(X_j)\Delta t + \sqrt{2D}\xi_j\sqrt{\Delta t},
 \end{align}
 where $\{\xi_j\}$ are  independent standard normal random vectors in $\mathbb{R}^d$.  From the resulting long-time simulations, we collect point samples to form the training dataset.

 To visualize the results, we estimate the invariant density by simulating long-time trajectories (again with the Euler–Maruyama scheme) and binning the samples into a two-dimensional histogram. We report two density estimates: a \emph{reference} density obtained from simulations using the true drift velocity and a \emph{learned} density obtained from simulations using our reconstructed velocity field $\bv_{\theta_2^*}$. Most examples show the densities using hexagonal bins for a smooth visualization. For direct comparison with prior work \cite{botvinick2023learning}, Example \ref{ex:1} employs rectangular bins.
 In all figures, ``Aug\_lag'' denotes PINN-IMSM implemented with the stochastic augmented Lagrangian method in Section \ref{sec:m-pinn}. 

\begin{example}[\bf{Van der Pol oscillator}]
\label{ex:1}
In this example, we consider the Van der Pol oscillator \cite{guckenheimer2003dynamics} with the drift velocity
\begin{equation}\label{eq:van}
\left\{\begin{array}{l}
\dot{x}=y ,\\
\dot{y}=c\left(1-x^2\right) y-x, 
\end{array}\right.
\end{equation}
where $c=0.5$. The diffusion coefficient is set as $D=0.05$.
\end{example}

Figure \ref{fig:van} compares the reference system (top row) with 
the PINN-IMSM reconstruction (bottom row). The top row displays the ground-truth velocity field (left), the corresponding reference invariant density (middle), and  noisy trajectory samples that that serve as the training data (right). The bottom row shows the results of the proposed PINN-IMSM method: the reconstructed velocity field $\bv_{\theta_2^*}$ (left), the learned invariant density induced by this field (middle), and representative trajectories generated by Langevin dynamics driven by the learned score network $\bs_{\theta_1^*}$ (right). The Langevin samples provide an independent check of the score estimation quality. We observe that the reconstructed velocity deviates from the ground-truth field in regions far from the attracting limit cycle, where trajectory data are sparse and the drift velocity is therefore weakly
identified. However, these regions carry little probability mass under the invariant measure. Consequently, the learned dynamics still reproduce the invariant density with high fidelity in the high-probability region around the
attracting set. This indicates that the PINN-IMSM method captures the correct long-time statistical structure even where the drift velocity is not tightly constrained in
low-density, poorly sampled regions. Both densities use rectangular bins for direct comparison with \cite{botvinick2023learning}. Under this matched discretization, the learned invariant density agrees well with those in \cite{botvinick2023learning}.

 Figure~\ref{fig:vanloss} further compares the PDE residual during training for the standard PINN and the proposed PINN-IMSM (``Aug\_lag''). It can be observed that the proposed PINN-IMSM method achieves a much smaller residual, demonstrating the effectiveness of the stochastic augmented Lagrangian method in enforcing the physics constraint.

\begin{figure}[!htbp]
	\centering
		\includegraphics[width=0.32\linewidth]{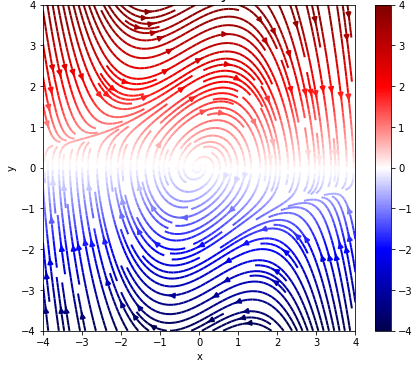}	\includegraphics[width=0.29\linewidth]{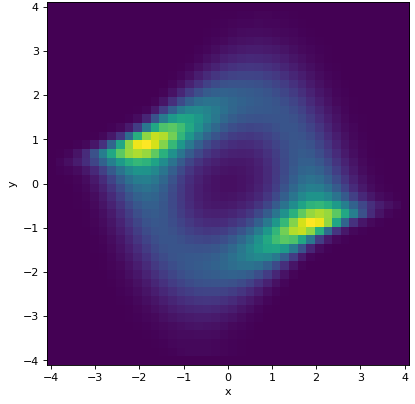}
		\includegraphics[width=0.3\linewidth]{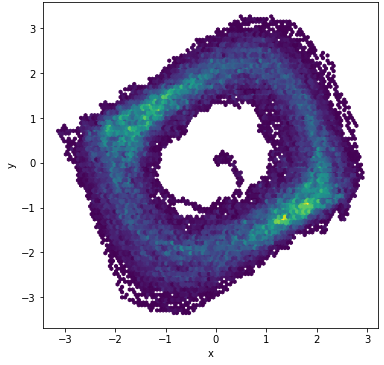}\\
		\includegraphics[width=0.32\linewidth]{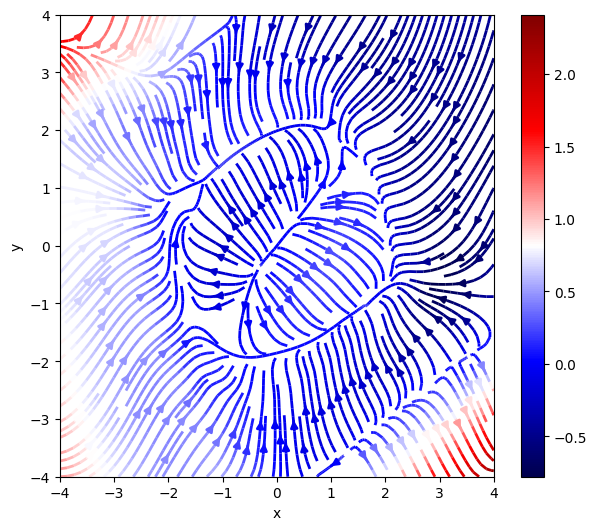}
		\includegraphics[width=0.29\linewidth]{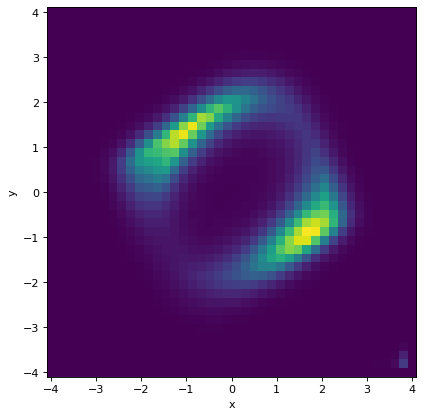}
		\includegraphics[width=0.3\linewidth]{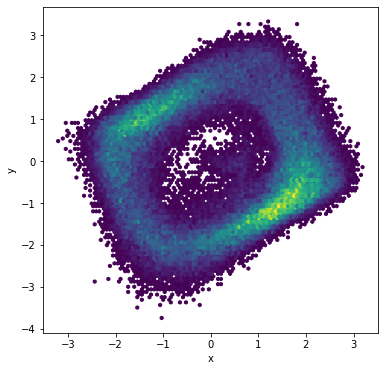}\\
  \caption{Van der Pol oscillator \eqref{eq:van} with $D=0.05$.
  \textbf{Top:} ground-truth velocity field (left), reference invariant density (middle), and noisy trajectory samples (right).
  \textbf{Bottom:} PINN-IMSM reconstructed velocity field $\bv_{\theta_2^*}$ (left), learned invariant density (middle), and samples generated by Langevin dynamics from the learned score $\bs_{\theta_1^*}$ (right).}
  \label{fig:van} 
\end{figure}

\begin{figure}[!htbp]
    \centering
    \includegraphics[width=0.6\linewidth]{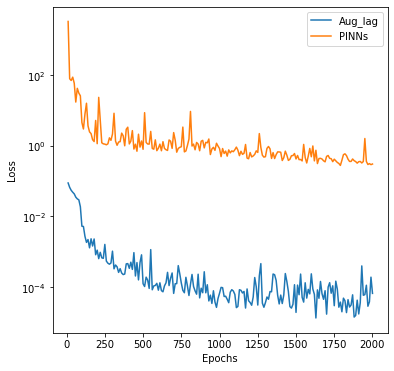}	
    \caption{PDE residual during training for the standard PINN and PINN-IMSM (``Aug\_lag'') on the Van der Pol oscillator.}
    \label{fig:vanloss}
\end{figure}

\begin{example}[\bf{Active swimmer in an anharmonic trap}]
We consider a model from active matter describing the motion of a motile swimmer in an anharmonic trap \cite{tailleur2008statistical}. The two-dimensional state $(x,\nu)$ evolves with the drift velocity
\begin{equation}\label{eq:swim}
   \begin{aligned}
       \dot{x}& = -x^3+\nu, \\
       \dot{\nu} & = -\gamma \nu ,
   \end{aligned}
\end{equation}
where $\gamma=0.1$, and the diffusion coefficient is set to $D=1.0$.
\end{example}

Figure~\ref{fig:swim} compares the learned invariant density from the PINN-IMSM method with the reference density for the active swimmer system. For conciseness, we show only the density visualizations in this example.  The close visual agreement demonstrates that the PINN-IMSM method captures the invariant statistics induced by the stochastically forced dynamics.

\begin{figure}[!htbp]
    \centering
    \includegraphics[width=0.33\linewidth]{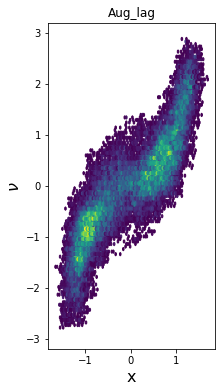}	\qquad \qquad\includegraphics[width=0.33\linewidth]{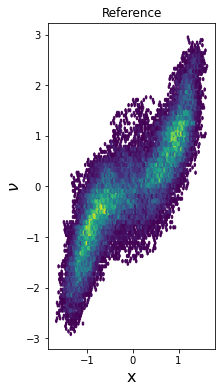}\\
    \caption{Invariant density estimates for the active swimmer model \eqref{eq:swim} with $\gamma=0.1$ and  $D=1.0$. \textbf{Left:} learned invariant density from the reconstructed velocity. \textbf{Right:} reference invariant density from the ground-truth velocity.}
    \label{fig:swim}
\end{figure}

\begin{example}[\bf{Lorenz-63 system}]
We consider the Lorenz-63 system \cite{luzzatto2005lorenz}, a canonical chaotic model. The drift velocity is given by
\begin{equation}\label{eq:lorenz}
\left\{\begin{array}{l}
\dot{x}=c_1(y-x), \\
\dot{y}=x(c_2-z)-y, \\
\dot{z}=xy-c_3 z,
\end{array}\right.
\end{equation}
where $(c_1,c_2,c_3)=(10,28,8/3)$. The diffusion coefficient is taken as $D=10$.
\end{example}

The Lorenz‑63 system presents a known challenge for the simultaneous reconstruction of all three velocity components. As noted in prior work \cite{botvinick2023learning}, a learned velocity field may  approximate the stationary state (i.e., satisfy the steady-state FP equation) yet fail to satisfy other physical properties of the invariant measure.  Whether this difficulty arises from inherent non‑uniqueness in the inverse problem or simply inconvenient local minima during training remains an open question for future investigation.

Given this challenge, we evaluate  the PINN-IMSM method on partial reconstructions.  Figure~\ref{fig:lorenz} presents the results. The left column shows the ground-truth drift velocity field and the corresponding reference invariant density. The middle column  shows a partial reconstruction where only $\dot{x}$ is learned, with $\dot{y}$ and $\dot{z}$ treated as known. The right column further learns both $\dot{x}$ and $\dot{y}$, again with $\dot{z}$ treated as known. In all cases, the invariant density is visualized via the empirical occupation measure projected on the  $(x,z)$ plane. Following \cite{yang2023optimal}, the occupation measure is formed by randomly subsample trajectories, which confirms that the PINN-IMSM method remains applicable when measurement times are irregular or unavailable.



\begin{figure}[!htbp]
	\centering
		\includegraphics[width=0.32\linewidth]{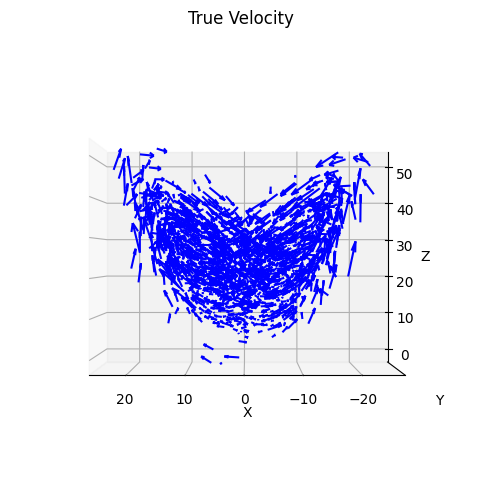}	\includegraphics[width=0.32\linewidth]{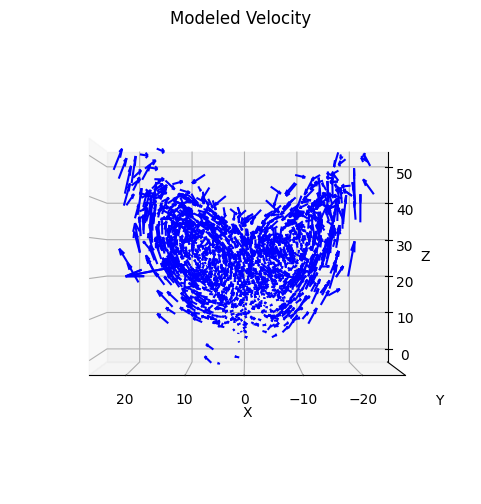}
		\includegraphics[width=0.32\linewidth]{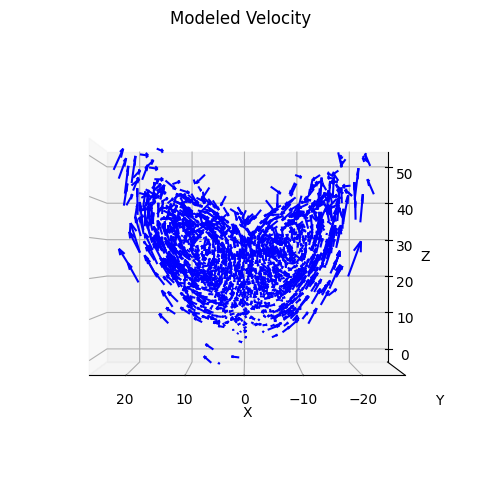}\\
		\includegraphics[width=0.3\linewidth]{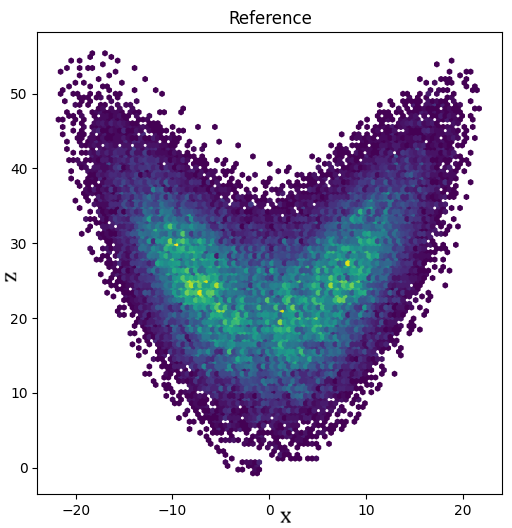}
		\includegraphics[width=0.3\linewidth]{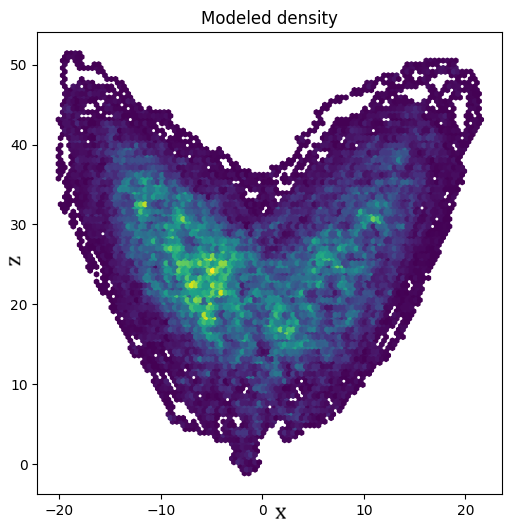}
		\includegraphics[width=0.3\linewidth]{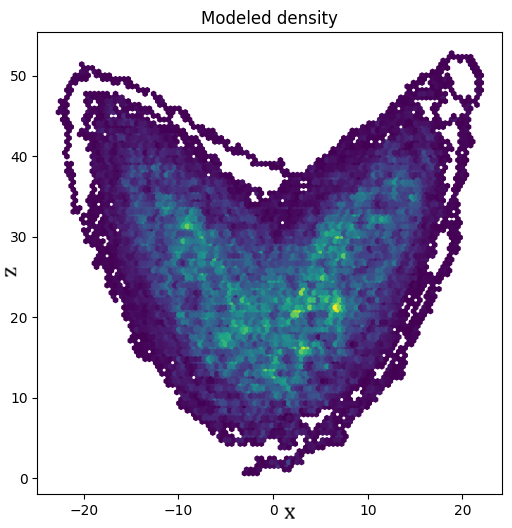}\\
  \caption{Lorenz-63 system \eqref{eq:lorenz} with $D=10$.
  \textbf{Top:} ground-truth velocity field (left), reconstructed velocity learning only $\dot{x}$ (middle), and reconstructed velocity  learning $\dot{x}$ and $\dot{y}$ (right).
  \textbf{Bottom:} corresponding invariant densities on the $(x,z)$ plane.}
  \label{fig:lorenz}
\end{figure}

\begin{example}[\bf{Lorenz-96 system}]
We examine the Lorenz-96 system \cite{ott2004local,karimi2010extensive}, defined for $i=1,\ldots,N$ by
\begin{equation}\label{eq:lorenz96}
    \frac{d x_i}{dt} = (x_{i+1}-x_{i-2})x_{i-1}-x_i+F,
\end{equation}
with periodic indexing: $x_{-1}=x_{N-1}$, $x_0=x_N$, and $x_{N+1}=x_1$. Here $F$ is a constant forcing term, the term $-x_i$ represents linear damping, and the quadratic term models advection while conserving kinetic energy in the absence of damping. We set $N=5$ and $F=8$. The diffusion coefficient is $D=0.05$.
\end{example}

Given the five-dimensional state space, we visualize the  invariant density induced by the reconstructed drift velocity through two-dimensional projection histograms. Figure~\ref{fig:lorenz96} shows representative projection densities, capturing the characteristic statistical structure of the chaotic attractor. These results demonstrate that the PINN-IMSM method  is effective in higher‑dimensional settings where mesh-based density solvers become computationally prohibitive.

\begin{figure}[!htbp]
    \centering
    \includegraphics[width=0.23\linewidth]{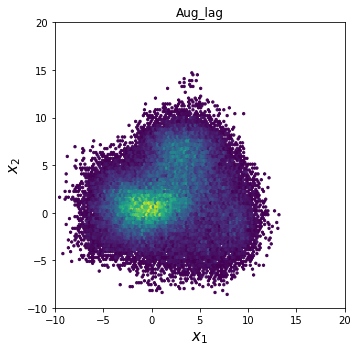}
    \hfill
    \includegraphics[width=0.23\linewidth]{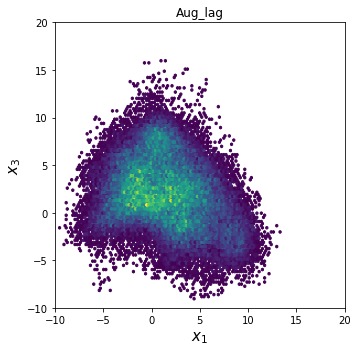}
    \hfill
    \includegraphics[width=0.23\linewidth]{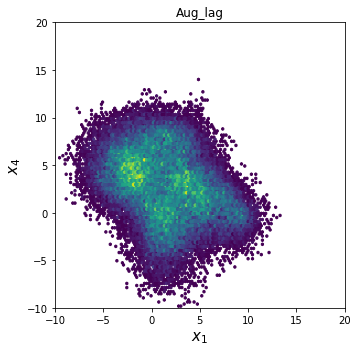}
    \hfill
    \includegraphics[width=0.23\linewidth]{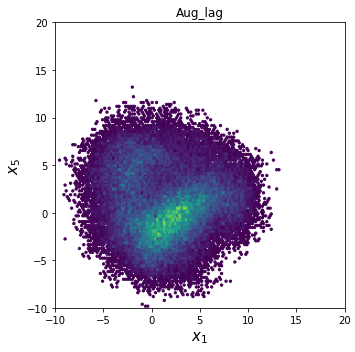}
    
    \vspace{0.2cm}
    
    \includegraphics[width=0.23\linewidth]{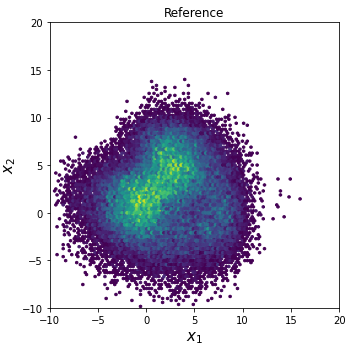}
    \hfill
    \includegraphics[width=0.23\linewidth]{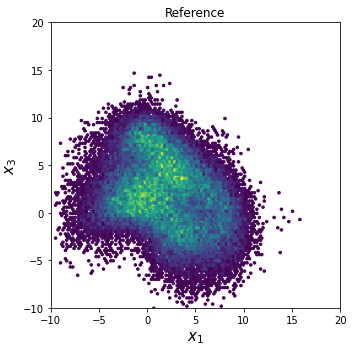}
    \hfill
    \includegraphics[width=0.23\linewidth]{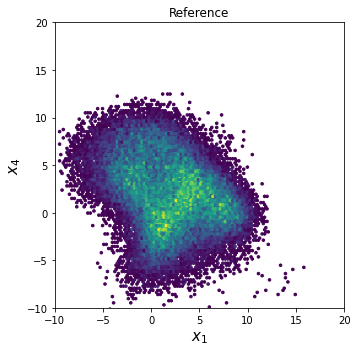}
    \hfill
    \includegraphics[width=0.23\linewidth]{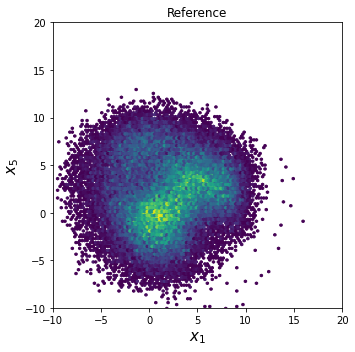}
    
    \caption{Lorenz-96 system \eqref{eq:lorenz96} with $N=5$, $F=8$, and  $D=0.05$. \textbf{Top:} invariant density projections from the PINN‑IMSM reconstruction.  \textbf{Bottom:}  corresponding reference invariant density from the true system. Two‑dimensional projection histograms onto $(x_{1},x_{2})$, $(x_{1},x_{3})$, $(x_{1},x_{4})$, and $(x_{1},x_{5})$ planes. }
    \label{fig:lorenz96}
\end{figure}

\section{Conclusion}\label{sec:con}
In this paper, we introduced PINN-IMSM, a novel mesh-free framework for reconstructing dynamical systems from unlabeled point-cloud observational data that capture the asymptotic statistical behavior of the underlying system without explicit temporal information. By adopting an Eulerian perspective and reformulating the stationary FP equation in terms of the score function, our approach overcomes the reliance on time‑labeled trajectories required by Lagrangian approaches. The core innovation of the proposed PINN-IMSM method is the integration of denoising score matching with physics-informed neural networks: the score function of the invariant measure is estimated directly from data, circumventing costly density estimation, and is then embedded into a PINN to reconstruct the velocity field under the PDE constraint. To ensure well-posedness in high-dimensional settings, the reconstruction is reformulated as a PDE‑constrained optimization that seeks the velocity field with minimal $L^2$-norm, solved efficiently using the stochastic augmented Lagrangian method. Numerical experiments on benchmark problems demonstrate that PINN-IMSM achieves accurate and stable reconstruction of dynamical systems, successfully handling problems up to five dimensions. This represents a significant advancement over existing mesh-based Eulerian approaches, which are typically limited to low-dimensional problems $(d \leq 3)$ due to the curse of dimensionality. Moreover, the method shows robustness in scenarios with incomplete velocity information, faithfully recovering chaotic attractors and complex dynamical behaviors from sparse observational data.  The theoretical foundation of our approach is strengthened by our proof that, under suitable conditions, the inverse problem admits a unique solution that depends continuously on the score function. 

Future work will extend the current framework to systems with non‑constant or anisotropic diffusion coefficients and develop theoretical tools for more general classes of stochastic dynamical systems. Further promising directions include incorporating uncertainty quantification and exploring applications to high‑dimensional problems in molecular dynamics, climate modeling, and biological systems.

\section*{Acknowledgments}
Research work of Y. Chen and X. Zhong  was partially supported by National Key R\&D Program of China 2024YFA1012302 and  NSFC Grant 11871428.

\bibliographystyle{unsrt} %
\bibliography{main}

\appendix
\section{Proof of Theorem \ref{them:unique}} \label{sec:appendix}
In this section, we show the detailed proof of Theorem \ref{them:unique} in Section \ref{sec:m-constrained}. This section is organized as follows: in Section \ref{sub:pre}, we introduce the necessary notations, function spaces, norms, and preliminary lemmas. In Section \ref{sec:well_forword}, we present the existence and regularity of the forward problem, which establishes an important relationship between the score function and the velocity field.
Finally, in Section \ref{sec:finalproof}, we complete the proof of Theorem \ref{them:unique} that ensuring the well-posedness of the inverse problem.

\subsection{Preliminaries}\label{sub:pre}
We first introduce function spaces used throughout the proof.  We consider the standard continuous function space $C^m(\Omega)$ for $m \geq 2$ equipped with the norm $\|\cdot\|_{C^m}$, and the space $C^{m,1}(\Omega)$ of $m$-times continuously differentiable functions whose $m$-th derivatives are Lipschitz continuous. We also employ the  Sobolev spaces $W^{m,p}(\Omega)$ for $m \ge 0$ and $1 \le p \le \infty$ equipped with the norm $\|\cdot\|_{W^{m,p}}$. The Hilbert space is denoted by $H^m(\Omega)$ for $m \ge 0$ equipped with the norm $\|\cdot\|_{H^m}$; in particular, $H^0(\Omega)=L^2(\Omega)$ with the $L^2$ norm denoted simply by $\|\cdot\|$. The dual space of $H^1(\Omega)$ is denoted by  $H^{-1}(\Omega)$.
For vector fields we use the standard Hilbert space
\[
H(\mathrm{div};\Omega)
:= \Big\{\bv\in L^2(\Omega;\mathbb{R}^d):\ \nabla\!\cdot\bv\in L^2(\Omega)\Big\},
\qquad
\|\bv\|_{H(\mathrm{div})}^2
:= \|\bv\|_{L^2}^2+\|\nabla\!\cdot\bv\|_{L^2}^2.
\]
We also employ the following subspaces and local spaces:
\[
H^1_\diamond(\Omega):=\Big\{w\in H^1(\Omega):\int_\Omega w\,\rd x=0\Big\}.
\]
and  $H^m_{\mathrm{loc}}(\Omega)$ consisting of functions belonging to $H^m$ on every compact subset of $\Omega$.

The following classical results are essential for our analysis. 


\begin{theorem}[Implicit Function Theorem {\cite[Theorem 3.3.1]{krantz2002implicit}}]
\label{thm:implicitfunction}
Let $X,Y,Z$ be Banach spaces and let $U\subset X$, $V\subset Y$ be open sets. Suppose $F:U\times V\to Z$ is continuously Fr\'echet differentiable. If there exist $(x_0,y_0)\in U\times V$ such that
\[
F(x_0,y_0)=0
\quad\text{and}\quad
D_yF(x_0,y_0):Y\to Z \text{ is a bounded linear isomorphism},
\]
then there exist neighborhoods $U_0\subset U$ of $x_0$ and $V_0\subset V$ of $y_0$, and a unique continuously differentiable mapping $g:U_0\to V_0$ such that
\[
F(x,g(x))=0\quad \text{for all } x\in U_0,
\qquad g(x_0)=y_0.
\]
Moreover, if $F\in C^k$ then $g\in C^k$, and 
$Dg(x) = -\big(D_yF(x,g(x))\big)^{-1} D_xF(x,g(x))$ for $x\in U_0$.
\end{theorem}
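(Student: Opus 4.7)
The plan is to prove this via the Banach Contraction Mapping Principle applied to a parameter-dependent family of operators. First I would perform a normalization: since $A_0 := D_yF(x_0,y_0)$ is a bounded linear isomorphism, replace $F$ by $\widetilde F(x,y) := A_0^{-1}F(x,y)$, for which $D_y\widetilde F(x_0,y_0) = I_Y$; after translating coordinates I further assume $(x_0,y_0)=(0,0)$. Define the auxiliary map $T_x(y) := y - \widetilde F(x,y)$; a fixed point of $T_x$ is exactly a zero of $\widetilde F(x,\cdot)$, so constructing $g$ reduces to producing a unique, regularly varying fixed point of $T_x$.

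Next, I would verify that $T_x$ is a self-contraction of a small closed ball. Since $D_yT_x(y) = I_Y - D_y\widetilde F(x,y)$ vanishes at $(0,0)$ and is continuous, there exist $\delta,r>0$ with $\|D_yT_x(y)\|_{\mathrm{op}} \le \tfrac12$ whenever $\|x\|_X\le\delta$ and $\|y\|_Y\le r$. The mean value inequality in Banach spaces then yields a $\tfrac12$-contraction estimate on $\overline{B}_r(0)\subset Y$. Shrinking $\delta$ further so that $\|T_x(0)\|_Y = \|\widetilde F(x,0)\|_Y \le r/2$ secures the self-mapping property. Banach's Fixed Point Theorem then produces, for each $x\in U_0:=B_\delta(0)$, a unique $g(x)\in\overline{B}_r(0)=:V_0$ with $\widetilde F(x,g(x))=0$, establishing existence and uniqueness in the claimed neighborhoods.

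For the regularity of $g$, I would first establish Lipschitz continuity from the identity
\[
g(x_1)-g(x_2) = \bigl(T_{x_1}(g(x_1))-T_{x_1}(g(x_2))\bigr) + \bigl(\widetilde F(x_2,g(x_2))-\widetilde F(x_1,g(x_2))\bigr),
\]
whose first term is absorbed by the $\tfrac12$-contraction. For differentiability, the candidate $Dg(x) = -[D_yF(x,g(x))]^{-1}D_xF(x,g(x))$ is motivated by formally differentiating $F(x,g(x))\equiv 0$ and is well-defined because invertibility is preserved in a neighborhood of $(x_0,y_0)$ (the invertibles form an open subset of $\mathcal{L}(Y,Z)$). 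Verification uses the joint Fr\'echet expansion
\[
0 = F(x+h,g(x+h)) - F(x,g(x)) = D_xF(x,g(x))h + D_yF(x,g(x))\bigl(g(x+h)-g(x)\bigr) + o\bigl(\|h\|_X+\|g(x+h)-g(x)\|_Y\bigr),
\]
combined with the Lipschitz bound $\|g(x+h)-g(x)\|_Y = O(\|h\|_X)$ to solve for $g(x+h)-g(x)$ and confirm the remainder is $o(\|h\|_X)$. Higher regularity follows by induction: since operator inversion is $C^\infty$ on the invertibles and $F\in C^k$, the formula for $Dg$ is a composition of $C^{k-1}$ maps, so $g\in C^k$.

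The main obstacle is the differentiability step: converting the fixed-point characterization of $g$ into a genuine Fr\'echet derivative identity requires careful quantitative control of the remainder in the joint expansion, which leans essentially on the uniform operator-norm invertibility of $D_yF$ near $(x_0,y_0)$ together with the already-established Lipschitz estimate for $g$. The contraction setup, while routine, is precisely what makes this control transparent: the constants $\delta$ and $r$ chosen in the fixed-point step propagate through the entire regularity argument and underpin both the derivative formula and the inductive bootstrap to $C^k$.
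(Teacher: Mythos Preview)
The paper does not prove this theorem at all: it is quoted verbatim as a classical preliminary result, with a citation to Krantz and Parks, and is then invoked as a black box in the proof of Lemma~\ref{the:3}. There is therefore no ``paper's own proof'' to compare against.

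That said, your proposal is the standard contraction-mapping proof of the Implicit Function Theorem in Banach spaces, and it is correct. The normalization $\widetilde F = A_0^{-1}F$ reducing to $D_y\widetilde F(0,0)=I_Y$, the choice of $T_x(y)=y-\widetilde F(x,y)$, the two-step shrinking of $\delta,r$ to secure both the $\tfrac12$-contraction and the self-mapping, and the Lipschitz-then-Fr\'echet bootstrap for regularity are all standard and sound. The derivative formula follows exactly as you describe, and the $C^k$ induction via smoothness of inversion on $\mathrm{GL}(Y,Z)$ is the usual closing step. Nothing is missing; this is essentially the argument one finds in the cited reference.
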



\subsection{Existence and regularity of the forward problem} 
\label{sec:well_forword}
We consider the stationary FP equation \eqref{eq:steady-pde} with
the boundary condition \eqref{eq:bc:simple} in  divergence form  
\begin{align}\label{eq:ZFN-strong}
\nabla\!\cdot\big(-D\nabla\rho+\rho\,\bv\big)&=0 \quad \text{in }\Omega, 
\\
(D\nabla\rho-\rho\,\bv)\cdot \bn&=0 \quad \text{on }\partial\Omega, \label{eq:BC}
\end{align}
where $\bv$ is a given velocity. 

The weak formulation of the problem \eqref{eq:ZFN-strong} is: find  $\rho \in H^1(\Omega)$ such that 
\begin{align}
    \label{eq:a_bilinear}
    a(\rho,\varphi):=\int_\Omega \big(D\nabla\rho - \rho\,\bv\big)\cdot\nabla\varphi\,\rd x=0, \quad 
    \forall\varphi\in H^1(\Omega).
\end{align}
The bilinear form $a(\cdot,\cdot)$ is bounded on $H^1(\Omega)\times H^1(\Omega)$, since by Cauchy-Schwarz inequality,  for any $u,\varphi\in H^1(\Omega)$,
\begin{align}
   \label{eq:abound}
    |a(u,\varphi)|
\le D\|\nabla u\|_{L^2}\|\nabla\varphi\|_{L^2}
+\|\bv\|_{L^\infty}\,\|u\|_{L^2}\,\|\nabla\varphi\|_{L^2} \leq C \|u\|_{H^1}\|\varphi\|_{H^1}.
\end{align}
Moreover, a direct computation yields the G\r{a}rding inequality.
\begin{align}
     \label{eq:agarding}
     a(u,u)=D\|\nabla u\|_{L^2}^2-\int_\Omega u\,\bv\cdot\nabla u\,\rd x
\ \ge\ \frac{D}{2}\|\nabla u\|_{L^2}^2 - \frac{\|\bv\|_{L^\infty}^2}{2D}\,\|u\|_{L^2}^2.
\end{align}

The following proposition clarifies that  a weak solution satisfies the boundary condition. Its proof follows the arguments of \cite[Proposition 31.13]{ern2021finite} with the diffusion flux taken as $\bm \sigma=-D\nabla\rho+\rho\,\bv$.
\begin{proposition}[Weak solution]
\label{prop:weak}
Let $\rho$ solve \eqref{eq:a_bilinear} with $\bv\in L^\infty(\Omega;\mathbb{R}^d)$. Then, the PDE in \eqref{eq:ZFN-strong} holds almost everywhere  in $\Omega$ and the boundary condition holds almost everywhere on $\partial \Omega$.
\end{proposition}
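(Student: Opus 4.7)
The plan is to follow the standard elliptic-PDE machinery of using first compactly supported test functions to extract the equation inside $\Omega$, and then arbitrary $H^1$ test functions together with the $H(\mathrm{div})$ Green's formula to extract the Neumann boundary condition. Throughout, let $\bm\sigma := -D\nabla\rho+\rho\,\bv$; since $\rho\in H^1(\Omega)$ and $\bv\in L^\infty(\Omega;\mathbb{R}^d)$, we immediately have $\bm\sigma\in L^2(\Omega;\mathbb{R}^d)$, and the weak formulation \eqref{eq:a_bilinear} rewrites as $\int_\Omega \bm\sigma\cdot\nabla\varphi\,\rd x = 0$ for all $\varphi\in H^1(\Omega)$.

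First I would prove the interior PDE. Restricting the weak formulation to $\varphi\in C_c^\infty(\Omega)\subset H^1(\Omega)$ gives $\langle \nabla\!\cdot\bm\sigma,\varphi\rangle_{\mathcal{D}',\mathcal{D}}=-\int_\Omega \bm\sigma\cdot\nabla\varphi\,\rd x = 0$, so $\nabla\!\cdot\bm\sigma = 0$ in $\mathcal{D}'(\Omega)$. Since the zero distribution is represented by the zero $L^2$-function, this promotes $\bm\sigma$ to $H(\mathrm{div};\Omega)$ with $\nabla\!\cdot\bm\sigma=0$ as an identity in $L^2(\Omega)$, which in turn holds pointwise almost everywhere in $\Omega$. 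This yields the first claim.

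Next I would address the boundary condition using the normal trace theorem for $H(\mathrm{div})$ on Lipschitz domains: the map $\gamma_\bn:H(\mathrm{div};\Omega)\to H^{-1/2}(\partial\Omega)$, $\bm\sigma\mapsto \bm\sigma\cdot\bn$, is continuous and satisfies the Green identity
\[
\int_\Omega \bm\sigma\cdot\nabla\varphi\,\rd x + \int_\Omega(\nabla\!\cdot\bm\sigma)\,\varphi\,\rd x
= \langle \gamma_\bn\bm\sigma,\gamma_0\varphi\rangle_{H^{-1/2}(\partial\Omega),H^{1/2}(\partial\Omega)},
\qquad \forall\varphi\in H^1(\Omega).
\]
Inserting the interior identity $\nabla\!\cdot\bm\sigma=0$ from the previous step and the weak formulation $\int_\Omega \bm\sigma\cdot\nabla\varphi\,\rd x=0$, the left-hand side vanishes, so $\langle \gamma_\bn\bm\sigma,\gamma_0\varphi\rangle=0$ for every $\varphi\in H^1(\Omega)$. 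Since the Dirichlet trace $\gamma_0:H^1(\Omega)\to H^{1/2}(\partial\Omega)$ is surjective on a Lipschitz domain, this forces $\gamma_\bn\bm\sigma=0$ in $H^{-1/2}(\partial\Omega)$, i.e.\ $(D\nabla\rho-\rho\,\bv)\cdot\bn=0$ as an element of $H^{-1/2}(\partial\Omega)$.

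The main subtlety, and the step I expect to require the most care, is upgrading the boundary identity from $H^{-1/2}(\partial\Omega)$ to a pointwise a.e.\ statement on $\partial\Omega$ as required by the proposition. This is not automatic from $H(\mathrm{div})$ alone, since $\gamma_\bn\bm\sigma$ is a priori only a distribution on $\partial\Omega$. I would resolve this by invoking interior and up-to-boundary elliptic regularity for the equation $-D\Delta\rho+\nabla\!\cdot(\rho\bv)=0$ with $\bv\in L^\infty$ (or in the context of Theorem~\ref{them:unique}, $\bs\in W^{1,\infty}$, which yields a $\bv$ regular enough so that $\rho\in H^2_{\mathrm{loc}}$ and locally up to $\partial\Omega$). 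Under this regularity, $\bm\sigma\cdot\bn$ belongs to $L^2(\partial\Omega)$, so the vanishing in $H^{-1/2}(\partial\Omega)$ coincides with vanishing in $L^2(\partial\Omega)$, and hence almost everywhere on $\partial\Omega$. This completes the proof along the lines of \cite[Proposition 31.13]{ern2021finite}.
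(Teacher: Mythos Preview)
Your proposal is correct and follows essentially the same route as the paper, which does not give an explicit proof but simply refers to \cite[Proposition~31.13]{ern2021finite} with the flux $\bm\sigma=-D\nabla\rho+\rho\,\bv$; your argument is precisely the standard $H(\mathrm{div})$ Green-formula machinery that underlies that reference. Your discussion of the upgrade from $H^{-1/2}(\partial\Omega)$ to a.e.\ on $\partial\Omega$ is a welcome clarification, since that step is implicit (and slightly glossed over) in the paper's statement.
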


\begin{lemma}[Existence, uniqueness and interior regularity]\label{lem:ZFN}
Assume $\bv\in L^\infty(\Omega;\mathbb{R}^d)$ and $\nabla\!\cdot\bv\in L^\infty(\Omega)$. Let $C_P$ be the Poincar\'e constant on $H^1_\diamond(\Omega)$, i.e.,
$\|w\|_{L^2}\le C_P\|\nabla w\|_{L^2}$ for all $w\in H^1_\diamond(\Omega)$.
If the diffusion coefficient satisfies
\( D>\|\bv\|_{L^\infty(\Omega)}\,C_P\),
then the boundary value problem \eqref{eq:ZFN-strong} admits a unique weak solution
$\rho\in H^1(\Omega)$ satisfying the normalization $\int_\Omega \rho\,\rd x=1$. Moreover, $\rho\in W^{2,p}_{\mathrm{loc}}(\Omega)$ for all $1<p<\infty$.
If, in addition, $\bv\in W^{m,\infty}(\Omega)$ for some integer $m \ge 2$, then $\rho\in H^{m}_{\mathrm{loc}}(\Omega)$.
\end{lemma}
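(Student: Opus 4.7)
The plan is to treat the three claims in sequence: (i) existence and uniqueness of a weak $H^1$-solution with the unit-mass normalization, (ii) interior $W^{2,p}$-regularity for every $p<\infty$, and (iii) the promotion to $H^m_{\mathrm{loc}}$ under added smoothness of $\bv$. The overall scaffolding is classical: reduce (i) to a Lax--Milgram problem on the mean-zero subspace $H^1_\diamond(\Omega)$, then bootstrap via interior elliptic regularity after rewriting the PDE in non-divergence form.

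For part (i), I would fix $\bar\rho := 1/|\Omega|$ and seek $u := \rho-\bar\rho \in H^1_\diamond(\Omega)$ satisfying $a(u,\varphi) = -a(\bar\rho,\varphi)$ for all $\varphi \in H^1_\diamond$. The right-hand side $F(\varphi) := -a(\bar\rho,\varphi) = \bar\rho\int_\Omega \bv\cdot\nabla\varphi\,\rd x$ is a bounded linear functional on $H^1_\diamond$ because $\bv\in L^\infty$. Boundedness of $a$ is \eqref{eq:abound}, and coercivity on $H^1_\diamond$ follows by combining the G{\aa}rding estimate \eqref{eq:agarding} with the Poincar\'e inequality $\|u\|_{L^2}\le C_P\|\nabla u\|_{L^2}$:
\[
a(u,u) \ge D\|\nabla u\|_{L^2}^2 - \|\bv\|_{L^\infty}\|u\|_{L^2}\|\nabla u\|_{L^2} \ge \bigl(D - \|\bv\|_{L^\infty}C_P\bigr)\|\nabla u\|_{L^2}^2,
\]
which is equivalent to $\|u\|_{H^1}^2$ on $H^1_\diamond$ under the standing assumption $D>\|\bv\|_{L^\infty}C_P$. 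Lax--Milgram then yields a unique $u$, and $\rho := u + \bar\rho$ satisfies $a(\rho,\varphi)=0$ for \emph{every} $\varphi\in H^1(\Omega)$ because the constant part of any test function has zero gradient and therefore contributes nothing to $a(\rho,\cdot)$. Uniqueness under the normalization $\int_\Omega\rho\,\rd x = 1$ follows by applying the same coercive estimate to the $H^1_\diamond$-difference of two candidate solutions.

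For part (ii), I would invoke Proposition~\ref{prop:weak} to rewrite the equation pointwise a.e.\ as $-D\Delta\rho + \bv\cdot\nabla\rho + (\nabla\cdot\bv)\rho = 0$, with $L^\infty$ coefficients. Since $\rho \in H^1_{\mathrm{loc}}$, the forcing $f := -\bv\cdot\nabla\rho - (\nabla\cdot\bv)\rho$ lies in $L^2_{\mathrm{loc}}$, so classical interior $H^2$-regularity for the Laplacian gives $\rho \in H^2_{\mathrm{loc}}$. Sobolev embedding then pushes $\nabla\rho$ into $L^q_{\mathrm{loc}}$ for some $q>2$, and iterating the Calder\'on--Zygmund interior $L^p$-estimate finitely many times yields $\rho \in W^{2,p}_{\mathrm{loc}}$ for every $1<p<\infty$. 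Part (iii) follows by induction on the Sobolev index: given $\bv \in W^{m,\infty}$ and $\rho \in H^k_{\mathrm{loc}}$ with $2 \le k \le m-1$, the right-hand side $f$ of $-D\Delta\rho = f$ lies in $H^{k-1}_{\mathrm{loc}}$ (products of $W^{k-1,\infty}$ coefficients and $H^k$ or $H^{k-1}$ factors remain in $H^{k-1}$), so interior $H^{k+1}$-regularity for the Poisson equation upgrades $\rho$ to $H^{k+1}_{\mathrm{loc}}$; iterating from the base case $k=2$ supplied by part (ii) up to $k=m-1$ delivers $\rho \in H^m_{\mathrm{loc}}$.

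The main subtlety I anticipate lies in part (i): one must verify that $a(\rho,\varphi)=0$ really holds on the full test space $H^1(\Omega)$ and not merely on $H^1_\diamond$, which is immediate from the observation above that constants annihilate $a(\rho,\cdot)$. The bootstraps in (ii) and (iii) must be carried out on compact subdomains via standard cut-off functions since we assume only Lipschitz regularity of $\partial\Omega$, but interior estimates are insensitive to boundary behavior, so this localization is routine and does not affect the conclusions.
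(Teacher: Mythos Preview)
Your proposal is correct and follows essentially the same route as the paper: reduce to Lax--Milgram on $H^1_\diamond(\Omega)$ via the splitting $\rho=|\Omega|^{-1}+u$, then pass to non-divergence form and invoke interior elliptic regularity. The only cosmetic difference is in the coercivity step: the paper first applies the G{\aa}rding inequality \eqref{eq:agarding} (which contains a Young estimate) and then Poincar\'e, giving $a(w,w)\ge\bigl(\tfrac{D}{2}-\tfrac{\|\bv\|_{L^\infty}^2 C_P^2}{2D}\bigr)\|\nabla w\|_{L^2}^2$, whereas you apply Cauchy--Schwarz and Poincar\'e directly to obtain $(D-\|\bv\|_{L^\infty}C_P)\|\nabla w\|_{L^2}^2$; both yield positivity under the same threshold $D>\|\bv\|_{L^\infty}C_P$, and your version is in fact the cleaner of the two (so your reference to \eqref{eq:agarding} is slightly misplaced---you never actually use it).
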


\begin{proof}
The function $\rho\in H^1(\Omega)$ with the normalization condition
$\int_\Omega\rho\,\rd x=1$ can be decomposed  as
\[
\rho \;=\; c + u, 
\]
where the constant $c=\frac{1}{|\Omega|}$, and $u\in H^1_\diamond(\Omega)$.
Then the problem \eqref{eq:a_bilinear} is equivalent to: find $u\in H^1_\diamond(\Omega)$ such that
\begin{equation}\label{eq:weak-u}
a(u,\varphi) \;= \,\int_\Omega c\,\bv\cdot\nabla\varphi\,\rd x\triangleq F(\varphi),
\quad\forall\,\varphi\in H^1_\diamond(\Omega).
\end{equation}

Clearly $F$ is a bounded linear functional on $H^1_\diamond(\Omega)$. Indeed, for any $\varphi\in H^1_\diamond(\Omega)$,
\[
|F(\varphi)|
=\Big|\int_\Omega c\,\bv\cdot\nabla\varphi\,\rd x\Big|
\le c\,\|\bv\|_{L^\infty(\Omega)}\,\|\nabla\varphi\|_{L^2(\Omega)}
\le C\,\|\varphi\|_{H^1(\Omega)}.
\]
 Moreover,  for any $w\in H^1_\diamond(\Omega)$, it follows from the G\r{a}rding inequality \eqref{eq:agarding} and the Poincar\'e inequality  that
\[
a(w,w)\ge \frac{D}{2}\|\nabla w\|_{L^2}^2-\frac{\|\bv\|_{L^\infty}^2}{2D}\|w\|_{L^2}^2
\ge \Big(\frac{D}{2}-\frac{\|\bv\|_{L^\infty}^2}{2D}C_P^2\Big)\|\nabla w\|_{L^2}^2= \kappa\,\|\nabla w\|_{L^2}^2,
\]
where $\kappa = \frac{D}{2}-\frac{\|\bv\|_{L^\infty}^2}{2D}C_P^2$.
The condition \( D>\|\bv\|_{L^\infty(\Omega)}\,C_P \) ensures $\kappa>0$.
Since $\|w\|_{H^1}^2=\|w\|_{L^2}^2+\|\nabla w\|_{L^2}^2\le (1+C_P^2)\|\nabla w\|_{L^2}^2$
on $H^1_\diamond(\Omega)$, we obtain
\[
a(w,w)\ge \frac{\kappa}{1+C_P^2}\,\|w\|_{H^1}^2.
\]
Thus the bilinear form $a(\cdot,\cdot)$ is coercive on $H^1_\diamond(\Omega)$. 
Together with its boundedness established in \eqref{eq:abound},
the Lax-Milgram theorem  \cite[Theorem~6.2]{evans2022partial}) yields a unique $u\in H^1_\diamond(\Omega)$ solving \eqref{eq:weak-u}, which also solves \eqref{eq:a_bilinear}. Setting $\rho = c+u$ gives a weak solution of \eqref{eq:ZFN-strong} with $\int_\Omega\rho\,\rd x=1$.

It follows from Proposition \ref{prop:weak} that $\rho$ satisfies \eqref{eq:ZFN-strong} almost everywhere, which can be rewritten as the  non-divergence form:
 \[
-D\Delta\rho + \bv \cdot \nabla\rho + (\nabla \cdot \bv)\rho = 0.
 \]
Standard $L^p$-regularity \cite[Theorems~9.11 and 9.15]{gilbarg1977elliptic} implies that  $\rho$ belongs to $W^{2,p}_{\mathrm{loc}}(\Omega)$ for all $1 < p < \infty$.  Furthermore, if $\bv \in W^{m, \infty}$ for some integer $m\ge2$, then $\nabla\cdot \bv \in W^{m-1, \infty}$. Since $W^{k,\infty}(\Omega)\hookrightarrow C^{k-1,1}(\overline{\Omega})$ for $k\ge1$ by the Sobolev embedding theorem, the coefficients of the elliptic equation are of class $C^{m-2,1}$ in the interior. Applying higher-order interior regularity theory for elliptic equations  \cite[Theorem~8.12]{gilbarg1977elliptic}), we obtain $\rho\in H^{m}_{\mathrm{loc}}(\Omega)$. 
\end{proof}

The condition \( D > |\mathbf{v}|_{L^\infty(\Omega)} C_P \) serves as a sufficient condition to ensure the coercivity of the bilinear form \( a(\cdot, \cdot) \), thereby enabling the direct application of the Lax-Milgram theorem. Physically, this corresponds to a diffusion-dominated regime (or a small Péclet number regime), where the stochastic diffusion is strong enough to counteract the deterministic drift, ensuring a stable stationary distribution. While the well-posedness of the FP equation may hold under weaker assumptions (e.g., via the Fredholm alternative for convection-dominated flows), this condition provides a rigorous and simplified framework sufficient for establishing the stability of the inverse problem.

The normalization condition $\int_\Omega\rho\,\rd x=1$ is consistent with the constraints \eqref{cond_u} for the FP equation. Lemma~\ref{lem:ZFN} guarantees the existence and interior regularity of the stationary density $\rho$.  Consequently, whenever $\rho>0$ on a  subset $\Omega'\subset\Omega$ with $\overline{\Omega'}$ compact in $\Omega$,
and $\rho\in H^m_{\mathrm{loc}}(\Omega)$ with $m>d/2+1$, its score function
\[
\bs = \nabla\log\rho = \frac{\nabla\rho}{\rho}
\]
is well defined on $\Omega'$ and belongs to $H^{m-1}(\Omega')$. In what follows we simply take $\bs$ as a given element of
$W^{1,\infty}(\Omega;\mathbb{R}^d)$ and study the constrained optimization problem associated with $\bs$.  This regularity assumption is natural in the context of elliptic regularity theory: for sufficiently smooth velocity fields \(\bv\) and under a uniform positivity condition on \(\rho\), the stationary FP equation indeed yields a score function with bounded first derivatives.

\subsection{Well-posedness of the constrained optimization}
\label{sec:finalproof}

We now analyze the optimization problem \eqref{eq:true-prob}
\begin{equation}\label{eq:true-prob1}
\min_{\bv}\ J(\bv):=\tfrac12\|\bv\|_{L^2(\Omega)}^2
\quad\text{subject to}\quad \widetilde{\mathcal{N}}(\bs,\bv)=0\ \text{ in }L^2(\Omega).
\end{equation}
where
\(
\widetilde{\mathcal{N}}(\bs,\bv)\;=\;\bs\!\cdot\!\bv\;+\;\nabla\!\cdot\bv\;-\;D\big(|\bs|^2+\nabla\!\cdot\bs\big).
\)
Throughout this subsection, we assume the score \(\bs\) is given with the regularity that can be justified under suitable conditions on the velocity field, as discussed after Lemma~\ref{lem:ZFN}.

\begin{lemma}[Existence and uniqueness of the minimizer]
\label{lem:opt}
Let \(\bs\in L^\infty(\Omega;\mathbb{R}^d)\) with \(\nabla\!\cdot\bs\in L^2(\Omega)\).
Define the linear operator $\mathcal{C}_\bs: H(\mathrm{div};\Omega)\to L^2(\Omega)$:
\[
\mathcal{C}_\bs(\bv)=\bs\!\cdot\!\bv+\nabla\!\cdot\bv,
\]
and let \(g_\bs=D\big(|\bs|^2+\nabla\!\cdot\bs\big)\).
Then the feasible set
\[
\mathcal{V}_{\mathrm{ad}}(\bs):=\{\bv\in H(\mathrm{div};\Omega):\ \mathcal{C}_\bs(\bv)=g_\bs\}
\]
is a nonempty closed affine subspace of \(H(\mathrm{div};\Omega)\).
Consequently, the problem \eqref{eq:true-prob1} admits a unique minimizer
\(\bv^*\in \mathcal{V}_{\mathrm{ad}}(\bs)\).
\end{lemma}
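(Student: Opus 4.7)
The plan is to split the argument into two stages: first verify the geometric structure of the feasible set $\mathcal{V}_{\mathrm{ad}}(\bs)$, and then deduce existence and uniqueness of the $L^2$-minimizer via the direct method of the calculus of variations.

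First I would check that $\mathcal{C}_\bs$ is a bounded linear operator from $H(\mathrm{div};\Omega)$ into $L^2(\Omega)$. Linearity is immediate, and boundedness follows from $\bs\in L^\infty$ via $\|\mathcal{C}_\bs(\bv)\|_{L^2}\le \|\bs\|_{L^\infty}\|\bv\|_{L^2}+\|\nabla\!\cdot\bv\|_{L^2}\le C_\bs\|\bv\|_{H(\mathrm{div})}$. Moreover $g_\bs\in L^2(\Omega)$, since $|\bs|^2\in L^\infty(\Omega)\subset L^2(\Omega)$ on the bounded domain $\Omega$ and $\nabla\!\cdot\bs\in L^2(\Omega)$ by hypothesis. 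Hence $\mathcal{V}_{\mathrm{ad}}(\bs)=\mathcal{C}_\bs^{-1}(\{g_\bs\})$ is a closed affine subspace of $H(\mathrm{div};\Omega)$. It is nonempty because the explicit candidate $\bv_0:=D\bs$ lies in $H(\mathrm{div};\Omega)$ and satisfies $\mathcal{C}_\bs(\bv_0)=D|\bs|^2+D\,\nabla\!\cdot\bs=g_\bs$.

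For existence of a minimizer I would run the direct method. Take any minimizing sequence $\{\bv_n\}\subset\mathcal{V}_{\mathrm{ad}}(\bs)$; by construction it is bounded in $L^2$, and the constraint rewritten as $\nabla\!\cdot\bv_n=g_\bs-\bs\cdot\bv_n$ together with $\bs\in L^\infty$ shows that $\{\nabla\!\cdot\bv_n\}$ is also bounded in $L^2$. Thus $\{\bv_n\}$ is bounded in $H(\mathrm{div};\Omega)$ and, by reflexivity, a subsequence converges weakly to some $\bv^*$ in $H(\mathrm{div};\Omega)$. Passing to the weak limit in the identity $\mathcal{C}_\bs(\bv_n)=g_\bs$ using the weak continuity of the bounded linear map $\mathcal{C}_\bs$ gives $\bv^*\in\mathcal{V}_{\mathrm{ad}}(\bs)$. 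Weak lower semicontinuity of $\|\cdot\|_{L^2}$ then yields $J(\bv^*)\le\liminf_n J(\bv_n)$, so $\bv^*$ realizes the infimum. Uniqueness follows from strict convexity of $J$: if $\bv_1,\bv_2$ were both minimizers, convexity of $\mathcal{V}_{\mathrm{ad}}(\bs)$ places $\tfrac12(\bv_1+\bv_2)$ in the feasible set, and the parallelogram identity $\|\tfrac12(\bv_1+\bv_2)\|_{L^2}^2=\tfrac12\|\bv_1\|_{L^2}^2+\tfrac12\|\bv_2\|_{L^2}^2-\tfrac14\|\bv_1-\bv_2\|_{L^2}^2$ forces $\bv_1=\bv_2$ in $L^2$, and therefore in $H(\mathrm{div};\Omega)$ because the constraint determines the divergence from the $L^2$ representative.

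The main obstacle is very mild: the argument is essentially routine functional analysis, with the only subtlety being the mismatch between the ambient space $H(\mathrm{div};\Omega)$ in which the feasible set is naturally closed and the weaker $L^2$-norm actually being minimized. The functional $J$ is not coercive on $H(\mathrm{div})$, so existence cannot be read off directly from the closed-convex-set projection theorem in that topology; what saves the argument is that the constraint itself recovers $L^2$-control of the divergence from $L^2$-control of $\bv$, thereby restoring $H(\mathrm{div})$-boundedness of any minimizing sequence and allowing weak compactness to be applied.
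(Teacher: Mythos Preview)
Your proof is correct and follows essentially the same route as the paper: nonemptiness via the explicit feasible point $\bv_0=D\bs$, closedness from boundedness of $\mathcal{C}_\bs$, existence via the direct method with weak lower semicontinuity of the $L^2$-norm, and uniqueness from strict convexity. If anything, your version is slightly more careful at the compactness step: you explicitly use the constraint $\nabla\!\cdot\bv_n=g_\bs-\bs\cdot\bv_n$ to upgrade $L^2$-boundedness of a minimizing sequence to $H(\mathrm{div})$-boundedness before extracting a weakly convergent subsequence, whereas the paper passes somewhat quickly from weak $L^2$-convergence to feasibility of the limit.
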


\begin{proof}
The set \(\mathcal{V}_{\mathrm{ad}}(\bs)\) is affine because it is the inverse image of a single element under the linear operator \(\mathcal{C}_\bs\).
It is nonempty since \(\bv_0=D\,\bs\in H(\mathrm{div};\Omega)\) satisfies
\[
\mathcal{C}_\bs(\bv_0)=\bs\!\cdot\!(D\bs)+\nabla\!\cdot(D\bs)=D\big(|\bs|^2+\nabla\!\cdot\bs\big)=g_\bs.
\]
For any \(\bv\in H(\mathrm{div};\Omega)\),
\[
\|\mathcal{C}_\bs(\bv)\|_{L^2}
\le \|\bs\|_{L^\infty}\|\bv\|_{L^2}+\|\nabla\!\cdot\bv\|_{L^2}
\le C\,\|\bv\|_{H(\mathrm{div})},
\]
hence \(\mathcal{C}_\bs\) is continuous and \(\mathcal{V}_{\mathrm{ad}}(\bs)\) is closed.

The objective \(J(\bv)=\tfrac12\|\bv\|_{L^2(\Omega)}^2\) is strongly continuous and strictly  convex on the Hilbert space \(H(\mathrm{div};\Omega)\). Thus it is also weakly lower semicontinuous. The constraint in \eqref{eq:true-prob1} can be written as \(\mathcal{C}_\bs(\bv)=g_\bs\) in \(L^2(\Omega)\) (via the continuous embedding \(H(\mathrm{div};\Omega)\hookrightarrow L^2(\Omega;\mathbb{R}^d)\)).
Let \(\{\bv_n\}\subset\mathcal{V}_{\mathrm{ad}}(\bs)\) be a minimizing sequence. Since \(\{J(\bv_n)\}\) is bounded, \(\{\bv_n\}\) is bounded in \(L^2(\Omega)\); hence, up to a subsequence, \(\bv_n\rightharpoonup \bv^\ast\) weakly in \(L^2(\Omega)\).
Because \(\mathcal{V}_{\mathrm{ad}}(\bs)\) is a closed convex subset of \(H(\mathrm{div};\Omega)\) and \(\mathcal{C}_\bs\) is continuous, the limit \(\bv^\ast\) is feasible, i.e., \(\bv^\ast\in\mathcal{V}_{\mathrm{ad}}(\bs)\).
By weak lower semicontinuity of \(J\),
\[
J(\bv^\ast)\le \liminf_{n\to\infty}J(\bv_n)=\inf_{\bv\in\mathcal{V}_{\mathrm{ad}}(\bs)}J(\bv),
\]
so \(\bv^\ast\) is a minimizer. Uniqueness follows from strict convexity of \(J\) on the convex set \(\mathcal{V}_{\mathrm{ad}}(\bs)\).
\end{proof}

Having established the existence and uniqueness of the minimizer for problem \eqref{eq:true-prob1}, we next analyze the stability of the solution operator \(\bs\mapsto\bv^*\) with respect to perturbations in the score function. 



\begin{lemma}[Continuity of the solution map \(\bs\mapsto\bv^*\)]\label{the:3}
Assume $\bs\in W^{1,\infty}(\Omega;\mathbb{R}^d)$ with $q_{\bs}\ge0$ almost everywhere in $\Omega$, where
\begin{equation}\label{eq:def-q_s}
q_{\bs}=|\bs|^2+\nabla\!\cdot\bs.
\end{equation}
 Let $\bv^*\in H(\mathrm{div};\Omega)$ be the unique minimizer of \eqref{eq:true-prob1}.
Then there exists a neighborhood $U\subset W^{1,\infty}(\Omega;\mathbb{R}^d)$ of $\bs$ such that, for every $\widetilde{\bs}\in U$, if $\widetilde{\bv}^*$ denotes the unique optimizer associated with $\widetilde{\bs}$, then
\[
\|\bv^*-\widetilde{\bv}^*\|_{L^2(\Omega)}
\ \le\ C\,\|\bs-\widetilde{\bs}\|_{W^{1,\infty}(\Omega)},
\]
where  the constant $C>0$ depends monotonically on $D$, $\Omega$, and $\bs$.
\end{lemma}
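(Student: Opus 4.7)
The plan is to characterize $\bv^*$ through its KKT system, reduce it to a linear elliptic Dirichlet problem for the Lagrange multiplier, and estimate perturbations of that PDE by energy methods (with the Implicit Function Theorem offering a clean alternative).

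First I would derive the optimality conditions. Strict convexity of $J$ combined with the linear equality constraint $\mathcal{C}_\bs(\bv)=g_\bs$ yields a unique Lagrange multiplier $\lambda$ for which $\langle \bv^*,\delta\bv\rangle_{L^2}=\langle \lambda,\mathcal{C}_\bs(\delta\bv)\rangle_{L^2}$ for every $\delta\bv\in H(\mathrm{div};\Omega)$. Integration by parts using the $H(\mathrm{div})$-trace identity, together with surjectivity of the normal trace onto $H^{-1/2}(\partial\Omega)$, forces $\lambda\in H^1_0(\Omega)$ and the pointwise representation $\bv^*=\bs\,\lambda-\nabla\lambda$. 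Substituting back into $\mathcal{C}_\bs(\bv^*)=g_\bs$ collapses the optimality system to the scalar Dirichlet problem
\[
-\Delta\lambda+q_\bs\,\lambda=D\,q_\bs \text{ in }\Omega,\qquad \lambda=0 \text{ on }\partial\Omega.
\]

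Under the hypotheses $\bs\in W^{1,\infty}(\Omega)$ and $q_\bs\ge 0$, the associated bilinear form is bounded and coercive on $H^1_0(\Omega)$ by Poincaré, so Lax--Milgram produces a unique solution $\lambda_\bs\in H^1_0(\Omega)$ with $\|\lambda_\bs\|_{H^1}\le C\|q_\bs\|_{L^2}$. Coercivity is stable under small $W^{1,\infty}$-perturbations of $\bs$, so the analogous representation $\widetilde\bv^*=\widetilde\bs\,\lambda_{\widetilde\bs}-\nabla\lambda_{\widetilde\bs}$ holds for every $\widetilde\bs$ in a sufficiently small ball $U$ around $\bs$. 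Subtracting the two reduced problems, $\delta\lambda:=\lambda_\bs-\lambda_{\widetilde\bs}$ satisfies
\[
-\Delta\delta\lambda+q_\bs\,\delta\lambda=(q_\bs-q_{\widetilde\bs})(D-\lambda_{\widetilde\bs}) \text{ in }\Omega,\qquad \delta\lambda=0 \text{ on }\partial\Omega.
\]
Testing against $\delta\lambda$ and invoking the bound $\|q_\bs-q_{\widetilde\bs}\|_{L^\infty}\le C(\|\bs\|_{L^\infty}+\|\widetilde\bs\|_{L^\infty})\,\|\bs-\widetilde\bs\|_{W^{1,\infty}}$ together with the uniform $L^2$-bound on $\lambda_{\widetilde\bs}$ over $U$ yields $\|\delta\lambda\|_{H^1}\le C'\|\bs-\widetilde\bs\|_{W^{1,\infty}}$, where $C'$ depends monotonically on $D$, the Poincaré constant of $\Omega$, and $\|\bs\|_{W^{1,\infty}}$.

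To conclude, write $\bv^*-\widetilde\bv^*=\bs\,\delta\lambda+(\bs-\widetilde\bs)\,\lambda_{\widetilde\bs}-\nabla\delta\lambda$ and apply the triangle inequality together with the $H^1$-bound on $\delta\lambda$ to obtain the required $L^2$-Lipschitz estimate for $\bv^*$. The principal technical obstacle is the rigorous derivation of the homogeneous Dirichlet condition $\lambda|_{\partial\Omega}=0$ from an optimality condition posed over $H(\mathrm{div};\Omega)$, a space that carries no a priori boundary constraint on $\bv$; this step rests on the $H(\mathrm{div})$ normal-trace theorem and surjectivity onto $H^{-1/2}(\partial\Omega)$. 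As an alternative route to the perturbation estimate, one can package the reduced PDE as $F(\bs,\lambda)=-\Delta\lambda+q_\bs\lambda-Dq_\bs$ acting $(W^{1,\infty}\times H^1_0)\to H^{-1}$ and apply Theorem~\ref{thm:implicitfunction}, since $D_\lambda F(\bs,\lambda_\bs)=-\Delta(\cdot)+q_\bs(\cdot)$ is an isomorphism by Lax--Milgram; the resulting local $C^1$-dependence of $\lambda$ on $\bs$ delivers the same Lipschitz conclusion.
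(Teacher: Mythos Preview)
Your proposal is correct and shares the paper's overall architecture---characterize $\bv^*$ via a Lagrange multiplier, reduce to a scalar elliptic problem for $\lambda$, and control perturbations---but diverges in the boundary condition placed on $\lambda$. You take variations over all of $H(\mathrm{div};\Omega)$, as the literal statement of \eqref{eq:true-prob1} demands; integration by parts together with surjectivity of the normal trace $H(\mathrm{div};\Omega)\to H^{-1/2}(\partial\Omega)$ then forces $\lambda\in H^1_0(\Omega)$, producing a Dirichlet problem. The paper instead invokes the physical no-flux condition \eqref{eq:bc:simple} to restrict admissible variations to $\phi\cdot\bn=0$, which leaves $\lambda$ unconstrained on $\partial\Omega$; it then supplements with a homogeneous Neumann condition and the normalization $\int_\Omega\lambda=0$, working in $H^1_\diamond(\Omega)$. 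Your route is cleaner analytically (the boundary condition is \emph{derived} rather than appended) and more faithful to the stated optimization problem; the paper's is the physically natural choice for the underlying Fokker--Planck model. Both lead to coercive bilinear forms under $q_\bs\ge0$ and hence to the same Lipschitz conclusion. For the stability step you give a direct energy estimate, with the Implicit Function Theorem offered as an alternative, while the paper goes straight to Theorem~\ref{thm:implicitfunction}; your direct route makes the dependence of the constants on $D$, $\Omega$, and $\|\bs\|_{W^{1,\infty}}$ more explicit. One small point to tighten: you assert existence of the Lagrange multiplier without checking a constraint qualification for $\mathcal{C}_\bs$. The simplest fix is to reverse the logic---solve the Dirichlet problem for $\lambda_\bs$ first, set $\bv_\bs=\bs\lambda_\bs-\nabla\lambda_\bs$, and verify directly (using $\lambda_\bs|_{\partial\Omega}=0$ in the integration-by-parts identity) that $\langle\bv_\bs,\delta\bv\rangle_{L^2}=0$ for every $\delta\bv\in\ker\mathcal{C}_\bs$, which certifies optimality.
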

\begin{proof}
We first characterize the minimizer of the constrained optimization problem \eqref{eq:true-prob1} via the Lagrange multiplier method by
introducing the Lagrangian functional on $H(\mathrm{div};\Omega)\times H^1_\diamond(\Omega)$:
\[
\mathcal{L}(\bv,\lambda;\bs)
=\tfrac12\|\bv\|_{L^2}^2
+\big(\bs\!\cdot\!\bv+\nabla\!\cdot\bv-Dq_\bs,\lambda\big)_{L^2},
\]
where $(\cdot,\cdot)_{L^2}$ denotes the $L^2$-inner product, i.e., $(f,g)_{L^2}:=\int_\Omega f(x)g(x)\,dx$. Since $\bv^\ast$ is a minimizer of \eqref{eq:true-prob1}, the first-order optimality condition implies that the G\^{a}teaux derivative of $\mathcal{L}$ with respect to $\bv$ vanishes at $\bv^\ast$ in every direction $\phi\in H(\mathrm{div};\Omega)$, i.e.,
\begin{equation}\label{eq:KKT-bv}
\delta_\bv\mathcal{L}(\bv^\ast,\lambda)[\phi]
=(\bv^\ast,\phi)_{L^2}
+\big(\bs\!\cdot\!\phi+\nabla\!\cdot\phi,\lambda\big)_{L^2}=0.
\end{equation}

Recall that the FP equation \eqref{eq:steady-pde} is accompanied by the no-flux boundary condition \eqref{eq:bc:simple}, $(D\nabla\rho-\rho\bv)\cdot\bn=0$,  which in terms of the score $\bs=\nabla\log\rho=\nabla \rho/\rho$ reads $(\bv-D\bs)\cdot\bn=0$.
We therefore require that the recovered velocity field satisfy the same condition. Consequently, the admissible directions $\phi$ should preserve this boundary condition, i.e., $\phi\cdot\bn=0$ on $\partial\Omega$. 

For $\lambda\in H^1_\diamond(\Omega)$ and $\phi\in H(\mathrm{div};\Omega)$, we interpret the term $(\lambda,\nabla\!\cdot\phi)_{L^2}$ in \eqref{eq:KKT-bv}  in the distributional sense:
\begin{align}
\label{eq:lambda_term}
  (\lambda,\nabla\!\cdot\phi)_{L^2}
=-(\nabla\lambda,\phi)_{L^2}
+\int_{\partial\Omega}\lambda \phi\!\cdot\!\bn \rd x=-\big(\nabla\lambda,\phi\big)_{L^2}, 
\end{align}
where the boundary integral vanishes due to $\phi\cdot\bn=0$. Substituting this into \eqref{eq:KKT-bv} yields
\[
(\bv^\ast,\phi)_{L^2}
-\big(\nabla\lambda-\lambda\bs,\phi\big)_{L^2}=0
\qquad\forall\,\phi\in H(\mathrm{div};\Omega),
\]
from which we infer the pointwise identity
\begin{equation}\label{eq:vstar-recon}
\bv^\ast=\nabla\lambda-\lambda\bs.
\end{equation}


Substituting $\bv^\ast = \nabla\lambda - \lambda\bs$ into the constraint  
$\bs\!\cdot\!\bv^\ast + \nabla\!\cdot\!\bv^\ast = D(|\bs|^2 + \nabla\!\cdot\bs)$ gives  
\[
-\Delta\lambda + q_\bs\,\lambda = - Dq_\bs \quad \text{in } \Omega.
\]
We supplement this equation with the homogeneous Neumann condition $\partial_n\lambda = 0$ on $\partial\Omega$ and the normalization $\int_\Omega \lambda\,dx = 0$ to obtain a well‑posed problem. Its weak formulation reads: find $\lambda\in H^1_\diamond(\Omega)$ such that
\begin{equation}\label{eq:weak-lambda}
a_\bs(\lambda,\varphi)
=\int_\Omega -Dq_\bs\,\varphi\,\rd x,
\qquad
\forall\,\varphi\in H^1_\diamond(\Omega),
\end{equation}
where $a_\bs$ is the bilinear form defined as
\begin{equation}\label{eq:def-a_s}
a_{\bs}(\lambda,\varphi)=\int_\Omega \nabla\lambda\cdot\nabla\varphi+q_{\bs}\lambda\varphi\,\rd x,
\qquad \lambda,\varphi\in H^1_\diamond(\Omega).
\end{equation}

If $q_{\bs}\geq 0$, then the bilinear form $a_\bs$ is coercive and bounded on
$H^1_\diamond(\Omega)\times H^1_\diamond(\Omega)$. In fact, by Cauchy-Schwarz inequality, for $\lambda,\varphi \in H^1_\diamond(\Omega)$, we have  
\begin{align}
\label{eq:continuity}
    |a_\bs(\lambda,\varphi)|&\leq \|\nabla \lambda\|_{L^2}\|\nabla \varphi\|_{L^2}+|q_{\bs}|{L^\infty} \, \| \lambda\|_{L^2}\| \varphi\|_{L^2} \leq C \| \lambda\|_{H^1}\| \varphi\|_{H^1}, \\
    |a_\bs(\lambda,\lambda)|&=\|\nabla \lambda\|_{L^2}^2+\int_\Omega q_\bs \lambda^2 \rd x\geq \|\nabla \lambda\|_{L^2}^2 \geq \frac{1}{1+C_p^2}\|\lambda\|_{H^1}^2,\label{eq:coercivity}
\end{align}
where $C_p$ is the Poincar\'e constant on $H^1_\diamond(\Omega)$. The right-hand side in \eqref{eq:weak-lambda}
is bounded on $H^1_\diamond(\Omega)$.
Therefore, by the Lax–Milgram theorem, there exists a unique
$\lambda(\bs)\in H^1_\diamond(\Omega)$ solving \eqref{eq:weak-lambda},
and the bound $\|\lambda(\bs)\|_{H^1}\le C\|q_\bs\|_{L^2}$ holds.

Define
\[
F:W^{1,\infty}(\Omega;\mathbb{R}^d)\times H^1_\diamond(\Omega)\to H^{-1}(\Omega),\qquad
F(\widetilde{\bs},\lambda):=-\Delta\lambda+q_{\widetilde{\bs}}\lambda+Dq_{\widetilde{\bs}},
\]
with $q_{\widetilde{\bs}}=|\widetilde{\bs}|^2+\nabla\!\cdot\widetilde{\bs}$.
Then $F$ is continuously Fréchet differentiable. We further define 
\[
D_\lambda F(\bs,\lambda)=-\Delta+q_\bs:H^1_\diamond(\Omega)\to H^{-1}(\Omega),
\]
which is associated with the bilinear form $a_\bs$ by
 $(D_\lambda F(\bs,\lambda)(\lambda),\varphi)_{L^2}=a_{\bs}(\lambda,\varphi)$ for all
$\lambda,\varphi\in H^1_\diamond(\Omega)$.
The coercivity of $a_\bs$ implies that the operator $D_\lambda F(\bs,\lambda)$ is a bounded linear isomorphism $H^1_\diamond(\Omega)\to H^{-1}(\Omega)$.
Hence, by Theorem~\ref{thm:implicitfunction}, there exist a neighborhood
$U\subset W^{1,\infty}(\Omega;\mathbb{R}^d)$ of $\bs$
and a $C^1$ map $\Lambda:U\to H^1_\diamond(\Omega)$ such that
$F(\widetilde{\bs},\Lambda(\widetilde{\bs}))=0$ for all $\widetilde{\bs}\in U$,
and $\Lambda$ is locally Lipschitz in the $H^1$-norm:
\[
\|\Lambda(\widetilde{\bs})-\Lambda(\bs)\|_{H^1}
\le C\,\|\widetilde{\bs}-\bs\|_{W^{1,\infty}}.
\]

From \eqref{eq:vstar-recon}, for $\widetilde{\bs}\in U$ we have
\[
\bv^\ast(\widetilde{\bs})-\bv^\ast(\bs)
=\nabla(\Lambda(\widetilde{\bs})-\Lambda(\bs))
-\Lambda(\widetilde{\bs})\,\widetilde{\bs}
+\Lambda(\bs)\,\bs.
\]
Taking the $L^2$-norm and using the uniform boundedness of
$\Lambda(\cdot)$ on $U$ gives
\[
\|\bv^\ast(\widetilde{\bs})-\bv^\ast(\bs)\|_{L^2}
\le C\,\|\Lambda(\widetilde{\bs})-\Lambda(\bs)\|_{H^1}
+C\,\|\Lambda(\bs)\|_{L^2}\|\widetilde{\bs}-\bs\|_{L^\infty}
\le C\,\|\widetilde{\bs}-\bs\|_{W^{1,\infty}}.
\]
This proves the claimed local Lipschitz continuity.
\end{proof}

\begin{rem}
The condition $q_{\bs}\ge0$ is sufficient to ensure coercivity of the bilinear form $a_{\bs}$ used in the proof. This can be relaxed to the weaker condition $\|q_{\bs}^-\|_{L^\infty(\Omega)} < C_P^{-2}$, where 
$q_{\bs}^-(x)=\max\{-q_{\bs}(x),0\}$ and $C_P$ is the Poincar\'e constant 
for $H^1_\diamond(\Omega)$. Indeed, under this condition we have
\begin{align*}
    |a_\bs(\lambda,\lambda)|&\geq\|\nabla \lambda\|_{L^2}^2-\int_\Omega q_\bs^- \lambda^2 \rd x\geq (1-C_P^2\|q_\bs\|_{L^\infty}) \|\nabla \lambda\|_{L^2}^2
   \geq \frac{1-C_P^2\|q_\bs\|_{L^\infty}}{1+C_p^2}\|\lambda\|_{H^1}^2,
\end{align*}
so that $a_\bs$ remains coercive. Hence, all conclusions of the theorem remain valid under this weaker condition.
\end{rem}


With the lemmas established above, we are ready to prove Theorem \ref{them:unique}.
\begin{proof}[Proof of Theorem \ref{them:unique}]
Under suitable conditions on the velocity field,
Lemma \ref{lem:ZFN} justifies the regularity of the invariant density and thus the regularity of the score function. Given such a score function $\bs$,
Lemma~\ref{lem:opt} establishes the existence and uniqueness of a minimizer \(\bv^*\) for problem \eqref{eq:true-prob1}. Lemma~\ref{the:3} shows that the solution map \(\bs\mapsto\bv^*\) is locally Lipschitz continuous from \(W^{1,\infty}(\Omega;\mathbb{R}^d)\) into \(L^2(\Omega;\mathbb{R}^d)\), which yields the stability assertion. This completes the proof of the theorem.
\end{proof}

\end{document}